\newtheorem{introtheorem}{Theorem}
\newtheorem{theorem}{Theorem}[section]
\newtheorem{corollary}[theorem]{Corollary}
\theoremstyle{definition}
\newtheorem{definition}[theorem]{Definition}
\newtheorem{remark}[theorem]{Remark}
\newtheorem*{remark*}{Remark}
\newtheorem*{remarks*}{Remarks}
\newtheorem*{definition*}{Definition}
\renewcommand{\tilde}{\widetilde}
\newcommand{\isomto}{\overset{\sim}{\rightarrow}}
\newcommand{\R}{\mathbf{R}}
\newcommand{\Z}{\mathbf{Z}}
\newcommand{\PP}{\mathbf{P}}
\newcommand{\T}{\mathbf{T}}
\newcommand{\F}{\mathbf{F}}
\newcommand{\C}{\mathbf{C}}
\newcommand{\PSL}{\mathrm{PSL}}
\newcommand{\PGL}{\mathrm{PGL}}
\newcommand{\Isom}{\mathrm{Isom}}
\newcommand{\na}{\circ}
\newcommand{\f}{\varphi}
\newcommand{\nn}{\nonumber}
\DeclareMathOperator{\Res}{Res}
\DeclareMathOperator{\V}{V}
\DeclareMathOperator{\E}{E}
\DeclareMathOperator{\Aut}{Aut}
\DeclareMathOperator{\dirac}{dirac}
\DeclareMathOperator{\Cay}{Cay}
\DeclareMathOperator{\Coc}{C_{har}}
\DeclareMathOperator{\Sp}{S}
\begin{document}

\date{\today\ (version 2.1)}
\title[Rigidity for Mumford curves]{Measure-theoretic rigidity for Mumford curves}
\author[G.\ Cornelissen]{Gunther Cornelissen}
\author[J.\ Kool]{Janne Kool}
\address{\normalfont{Mathematisch Instituut, Universiteit Utrecht, Postbus 80.010, 3508 TA Utrecht, Nederland}}
\email{\{g.cornelissen,j.kool2\}@uu.nl}
\subjclass[2010]{14G22, 30F40, 46S10, 53C24}
\thanks{We thank Aristides Kontogeorgis for his suggesting the relation to the configuration problem in Remark \ref{ak}. We thank Tom Ward and the referee for their careful reading of the manuscript and useful comments on the exposition.}
\begin{abstract}
\noindent One can describe isomorphism of two compact hyperbolic Riemann surfaces of the same genus  by a measure-theoretic property: a chosen isomorphism of their fundamental groups corresponds to a homeomorphism on the boundary of the Poincar\'e disc that is absolutely continuous for Lebesgue measure if and only if the surfaces are isomorphic. 

In this paper, we find the corresponding statement for Mumford curves, a nonarchimedean analog of Riemann surfaces. In this case, the mere absolute continuity of the boundary map (for Schottky uniformization and the corresponding Patterson--Sullivan measure) only implies isomorphism of the special fibers of the Mumford curves, and the absolute continuity needs to be enhanced by a finite list of conditions on the harmonic measures on the boundary (certain nonarchimedean distributions constructed by  Schneider and Teitelbaum) to guarantee an isomorphism of the Mumford curves. 

The proof combines a generalization of a rigidity theorem for trees due to Coornaert, the existence of a boundary map by a method of Floyd, with a classical theorem of Babbage--Enriques--Petri on equations for the canonical embedding of a curve.  \end{abstract} 

\maketitle

\section*{Introduction}
The celebrated measure theoretic rigidity for hyperbolic Riemann surfaces specializes to the following statement (Mostow \cite{Mostow}, Kuusalo \cite{Kuusalo}): if $X_1$ and $X_2$ are compact hyperbolic Riemann surfaces, written as quotients of the Poincar\'e disk $\Delta$ by Fuchsian groups $\Gamma_1$ and $\Gamma_2$ of the first kind, then $X_1$ and $X_2$ are isomorphic (viz., $\Gamma_1$ and $\Gamma_2$ are conjugate in $\PSL(2,\R)$) if and only if the boundary map $\varphi$ associated to $\Gamma_1$ and $\Gamma_2$ is absolutely continuous with respect to Lebesgue measure. Here, $\varphi$ is a homeomorphism $$\varphi \colon \partial \Delta = S^1 \rightarrow \partial \Delta = S^1$$ that is equivariant w.r.t.\ a chosen \emph{group} isomorphism $\alpha \colon \Gamma_1 \rightarrow \Gamma_2$ in the sense that $$\varphi(\gamma_1 \cdot x) = \alpha(\gamma_1)\varphi(x)$$ for any $x \in S^1 \cong \PP^1(\R)$ (on which $\PSL(2,\R)$ acts naturally by fractional linear transformations), and for any $\gamma_1 \in \Gamma_1$. For higher-dimensional hyperbolic manifolds, the absolute continuity is automatic, giving the celebrated rigidity theorem of Mostow. 

There exist versions of this result for more general Fuchsian groups, i.e., Schottky groups,where, instead of Lebesgue measure, more general Patterson--Sullivan measure on the boundary (=limit set) is used \cite{Patterson} \cite{Sullivan}. There are also analogues for more general subgroups of isometry groups of hyperbolic space that, for example, apply to graphs, cf.\ Bowen \cite{Bowen},  
Coornaert \cite{Coornaert}, Hersonsky and Paulin \cite{HP}, Tukia \cite{Tukia}, Yue \cite{Yue}. 

As is well-known, there is a $p$-adic, or more general nonarchimedean, analogue of uniformization of Riemann surfaces by Schottky groups, namely, Mumford's theory of uniformization by nonarchimedean Schottky groups (cf.\ for example, Mumford \cite{Mumford}, \cite{GvdP}). 

\subsection*{Example}
Let us give an intuitive example of a Mumford curve in positive characteristic $p$ to illustrate all the ingredients. Let $\F_q$ be the finite field with $q=p^n$ elements, let $k=\F_q((T))$ be the valued field of power series in $T$, and let $K$ be a complete algebraically closed extension of $k$. Let $\lambda\in k$ be such that $0<|\lambda|<1$, i.e., let $\lambda=\sum_{i=N}^\infty a_iT^i$, with $a_i\in\F_q$ and $N\geq1$. Then the following equation describes a so called Artin-Schreier-Mumford curve over $K$: 
\[X_\lambda=\{(x,y)|(x^q-x)(y^q-y)=\lambda\}\subset\PP^1(K)\times\PP^1(K).\]
The reduction of this curve, i.e., the curve modulo $T$ over the residue field, which in this example is the finite field $\F_q((T))/(T)\cong\F_q$, is given by 
\[\{(x,y)|(x^q-x)(y^q-y)=0\}\subset\PP^1(\F_q)\times\PP^1(\F_q).\] 
The solution of this equation forms a "chess board" of $q$ horizontal an $q$ vertical copies of $\PP^1(\F_q)$. This object is what is called the special fibre of the curve. The very same Mumford curve can also be obtained by a uniformization process. The uniformizing space is the nonarchimedean analytic space $\PP^1(K)-\Lambda_\Gamma$, where $\Lambda_\Gamma$ is the limit set of a Schottky group $\Gamma\leq\PGL(2,k)$, which acts by M\"obius transformations properly discontinuously on $\PP^1(K)-\PP^1(k)$, having a limit set contained $\PP^1(k)$. A Schottky group corresponding to the curve $X_\lambda$ can be constructed as follows: define 
\[\epsilon_u=\left(\begin{array}{cc}1&u\\0&1\end{array}\right),
\tau=\left(\begin{array}{cc}0&t\\1&0\end{array}\right), \epsilon_u'=\tau\epsilon_u\tau,\]
where $u\in\F_q$ and $t\in k^*$ such that $0<|t|<1$. Let $\Gamma(t)$ be the group generated by the commutators $[\epsilon_u,\epsilon_u']$. Then there is a $t$ such that $X_\lambda=\Gamma(t)\backslash(\PP^1(K)-\Lambda_{\Gamma(t)})$. The relation between $\lambda$ and $t$ is  transcendental and studied in \cite{CoKaKo}. The group $\Gamma(t)$ also acts properly discontinuously on the Bruhat-Tits tree, having limit points in the boundary of the tree. The boundary of the tree is canonically isomorphic to $\PP^1(k)$ and the set of limit points is exactly the same set $\Lambda_\Gamma$ as above. Denote by $\T(\Lambda_\Gamma)$ the tree spanned by the limit set. The quotient $\Gamma\backslash\T(\Lambda_\Gamma)$ is a finite graph which is in fact the intersection graph of the special fibre. In this example this is the regular bi-partite graph with $2q$ vertices; a vertex for each copy of $\PP^1(\F_q)$. \\

In exact terms, Mumford's uniformization theory by nonarchimedean Schottky groups comprises the following. A projective curve $X$ of genus $g \geq 2$ over a complete algebraically closed nonarchimedean valued field $K$ can be considered as a one-dimensional compact rigid analytic space (in the sense of Tate \cite{Tate}), and this analytic space is a quotient $\Gamma \backslash (\PP^1(K)-\Lambda_\Gamma)$ for some discrete subgroup $\Gamma \subseteq \PGL(2,k)$ with limit set $\Lambda_\Gamma \subseteq \PP^1(k)$(cf. Gerritzen and Van der Put \cite{GvdP} prop\ 1.6.4)) precisely if the special fibre of a stable model $X$ over the ring of integers of $k$ is totally split (i.e., a union of rational curves over the algebraic closure of the residue field). Here $k\subset K$ is a discrete valued complete nonarchimedean subfield. 

To emphasize the analogy, let us briefly recall classical Schottky uniformization.  A Riemann surface $X$ can be described as a quotient $\Gamma\backslash (\PP^1(\C)-\Lambda_\Gamma)$, where $\Gamma\subseteq\PGL(2,\C)=\Aut(\PP^1(\C))$ is a free discrete group of rank $g$ (=genus of $X$) with limit set $\Lambda_\Gamma\subset \PP^1(\C)$. Manin and others have argued that the classical complex analogue of the special fibre should be the set of closed geodesics in the solid handlebody $\Gamma \backslash \mathbb{H}^3$ given by the quotient of real hyperbolic 3-space $\mathbb{H}^3$ by $\Gamma \hookrightarrow \PSL(2,\C) = \Aut(\mathbb{H}^3)$ (compare \cite{Manin}, \cite{MaMa}). Table \ref{table1} gives a tabular representation of the analogy. 

\begin{table}[h]

\begin{tabular}{lll}
& Complex case & Nonarchimedean case \\ \hline
Ground field & $\C$ & $k$ complete discrete nonarchimedean  \\
& & $k \subseteq K$ complete algebraically closed  \\  
Free group of rank $g$ & $F_g$ & $F_g$ \\ 
Schottky group & $F_g \cong \Gamma \hookrightarrow \PGL(2,\C)$ & $F_g \cong \Gamma \hookrightarrow \PGL(2,k)$ \\
Limit set & $\Lambda_\Gamma \subseteq \PP^1(\C)$ &  $\Lambda_\Gamma \subseteq \PP^1(k) \subseteq \PP^1(K)$ \\
Curve & $X(\C) \cong \Gamma \backslash ( \PP^1(\C)-\Lambda_\Gamma)$ & $X(K) \cong \Gamma \backslash ( \PP^1(K)-\Lambda_\Gamma)$ \\
Hyperbolic space & $\mathbb{H}^3$ & $\T$ (Bruhat-Tits tree) \\
Boundary &    $\partial \mathbb{H}^3 = \PP^1(\C)$   & $\partial \T = \PP^1(k)$ \\ 
Special fiber &  $\{$closed geodesics in $\Gamma \backslash \mathbb{H}^3 \}$      &   $   \mathcal{X} \otimes \bar k$  \\
& & ($\mathcal{X}=$ model of $X$ over integers of $k$) \\
Subspace & $\mathbb{H}^3(\Lambda_\Gamma)=$   & $\T(\Lambda_\Gamma)=$ \\ 
 & $\{$geodesics ending on $\Lambda_\Gamma\}$ & subtree spanned by limit set\\
Dual of special fiber & $\Gamma \backslash \mathbb{H}^3(\Lambda_\Gamma)$ & $\Gamma \backslash \T(\Lambda_\Gamma)$\\
 \hline \hline
\end{tabular}
\caption{Analogy between classical and nonarchimedian Schottky uniformization}
\label{table1}
\end{table}
In this paper, we consider the analog of measure-theoretic rigidity for the case of Mumford curves: can one describe isomorphism of Mumford curves by measure-theoretic properties of an associated boundary map?

Let us first explain why an obvious analog of the complex analytic statement is false.
 A nonarchimedean Schottky group $\Gamma \subseteq \PGL(2,k)$ is also a group of automorphisms of the tree $T(\Lambda_\Gamma)$ spanned by the limit set as a subtree of the  Bruhat-Tits tree over the residue field of $k$. This limit set indeed carries a  Patterson-Sullivan measure of dimension $e(\Gamma)$ (see section \ref{trees}), but absolute continuity of a boundary map for two such groups will only imply conjugacy of Schottky groups in the isometry group of the tree, \emph{not} inside $\PGL(2,k)$. Now $\Aut(T(\Lambda_\Gamma))$ is much larger than its ``linear cousin'' $\PGL(2,k)$, and conjugacy of Schottky groups inside $\Aut(T(\Lambda_\Gamma))$ is, as we will see, only equivalent to an isomorphism of the \emph{special fibers} of the two curves. There are many non-isomorphic curves with isomorphic special fibers---actually, for any given curve, uncountably many. Another manifestation of this fact is the statement that the group $\Gamma$ has uncountably many conjugacy classes inside $\PGL(2,k)$, but only countably many inside $\Isom(T(\Lambda_\Gamma))$ (this observation was communicated to us by Lubotzky, compare \cite{Lubotzky}). 

To remedy this problem, we suggest to consider, in addition to Patterson-Sullivan measure, the so-called \emph{harmonic measures of weight $\ell$} on the boundary, in the sense of Schneider and Teitelbaum (\cite{Schneider}, \cite{Teitelbaum}). For $\ell=2$ we call these simply harmonic measures, and denote the $g$-dimensional vector space of such by $\Coc(\Gamma,2)$. For $\ell\geq 3$, the harmonic measures of weight $\ell$ for $\Gamma$ span a $(\ell-1)(g-1)$-dimensional vector space $\Coc(\Gamma,\ell)$. Denote by $\Omega_{X_\Gamma}$ the canonical bundle over the curve $X_\Gamma$. Teitelbaum's Poisson kernel theorem gives, for every even $\ell$, explicit back and forth isomorphisms $(\mathrm{Poisson}(\ell),\mathrm{Res}(\ell))$ between the space of weight-$\ell$ harmonic measures and the space of holomorphic $\ell/2$-differential forms on the corresponding Mumford curve $X_\Gamma$ (a.k.a.\ the space of weight $\ell$ modular forms for $\Gamma$): 
$$ \xymatrix{ \Coc(\Gamma,\ell)  \ar@/^1.5pc/[r]^{\mathrm{Poisson(\ell)}}   & H^0(X_\Gamma,\Omega_{X_\Gamma}^{\otimes \ell/2})   \ar@/^1.5pc/[l]^{\mathrm{Res}(\ell)}  } $$

Thus, we can define a \emph{product} of harmonic measures: let $\boldsymbol{\ell}=(\ell_1,\dots,\ell_N)$ be a vector of strictly positive even integers, and define the product map $m_{\boldsymbol{\ell}}$ by the commutativity of the following diagram
\begin{equation} \label{dia}  \xymatrix{   \bigotimes\limits_{i=1}^N \Coc(\Gamma,\ell_i)   \ar@{->}[r]_{m_{\boldsymbol{\ell}}} \ar@{->}[d]_{\bigotimes \mathrm{Poisson}(\ell_i)} &  \Coc(\Gamma,\sum\limits_{i=1}^N \ell_i) \\                    
 \bigotimes\limits_{i=1}^N H^0(X_\Gamma,\Omega_{X_\Gamma}^{\otimes \ell_i/2})  \ar@{->}[r]  &  H^0(X_\Gamma,\Omega_{X_\Gamma}^{\otimes \sum\ell_i/2}) \ar@{->}[u]_{\mathrm{Res}(\sum \ell_i)}  } \end{equation} 
in which the bottom arrow is just the usual product of differential forms. 
We use the shorthand notation $R_2(\Gamma):=\ker(m_{(2,2)})$, $R_3(\Gamma):=\ker(m_{(2,2,2)})$ and $R_4:=\ker(m_{(2,2,2,2)})$. These kernels describe the linear relations between products of degree two, three and four of harmonic measures.  

Now our result is the following: 

\begin{introtheorem} \label{mainthm} Let  $k$ be a discrete valued nonarchimedean field, and let $K$ denote a complete and algebraically closed field extension of $k$. Let $\Gamma_1,\Gamma_2\leq\PGL(2,k)$ be Schottky groups of the same genus $g\geq 2$, defining Mumford curves $X_1$ and $X_2$, respectively over $K$. Let $\varphi$ denote the boundary map associated to an isomorphism of $\Gamma_1$ with $\Gamma_2$.
 
\begin{itemize}
\item[\textup{(i)}] 
The special fibers of $X_1$ and $X_2$ are isomorphic if and only if there is an isometry \[F:T(\Lambda_{\Gamma_1})\to T(\Lambda_{\Gamma_2})\]  which conjugates the Schottky groups $\Gamma_i$, if and only if the corresponding Patterson-Sullivan measures are of the same conformal dimension and the map $\varphi$ is absolutely continuous for these measures. \\
If this is the case, then pullback by $F^{-1}$ induces an isomorphism of the spaces of harmonic measures 
$$ F_* \colon \Coc(\Gamma_1,2) \isomto \Coc(\Gamma_2,2).$$
\item[\textup{(ii)}] Suppose that the curves $X_i$ are not hyperelliptic. Then the curves $X_1$ and $X_2$ are isomorphic over $K$ if and only if the corresponding Schottky groups $\Gamma_i$ are conjugate in $\PGL(2,k)$, if and only if the map $\varphi$ is absolutely continuous with respect to Patterson--Sullivan measure \emph{and} $F_*$ respects the linear relations of degree $\leq 4$ between the harmonic boundary measures in the sense that $$F_*R_2(\Gamma_1) \subseteq R_2(\Gamma_2), F_*R_3(\Gamma_1)\subseteq R_3(\Gamma_2) \mbox{ and }F_*R_4(\Gamma_1)\subseteq R_4(\Gamma_2).$$ The relations in $R_4$ are only necessary if $g=3$. The relations in $R_3$ are only necessary if $X$ is trigonal (i.e., admits a map $X \rightarrow \PP^1$ of degree 3), or if $X$ is isomorphic to a plane quintic. 
\end{itemize}
\end{introtheorem}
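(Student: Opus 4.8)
The plan is to prove the two equivalences by combining three strands: a rigidity theorem for tree automorphisms, the Poisson--Residue dictionary of Schneider--Teitelbaum, and the Babbage--Enriques--Petri description of the canonical ideal.

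For part (i), I would first observe that everything hinges on identifying when the boundary map $\f$ upgrades from a group-equivariant homeomorphism of limit sets to a \emph{tree} isometry $F\colon T(\Lambda_{\Gamma_1})\to T(\Lambda_{\Gamma_2})$ conjugating the Schottky groups. The plan is to invoke a Coornaert-type rigidity statement (in the generalized form promised in the earlier sections on trees): a boundary homeomorphism that is equivariant for the group isomorphism $\alpha$ and absolutely continuous for the Patterson--Sullivan measures of equal conformal dimension $e(\Gamma_1)=e(\Gamma_2)$ must be induced by an isometry of the spanned subtrees. The existence of \emph{some} boundary map realizing $\alpha$ I would obtain by Floyd's construction. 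Conversely, a tree isometry conjugating the groups descends to an isomorphism of quotient graphs $\Gamma_1\backslash T(\Lambda_{\Gamma_1})\cong\Gamma_2\backslash T(\Lambda_{\Gamma_2})$, and since this quotient graph is exactly the dual/intersection graph of the special fiber (as recalled in the introductory example), one recovers an isomorphism of special fibers. I would spell out that an isometry $F$ sends harmonic cocycles to harmonic cocycles because the harmonicity condition defining $\Coc(\Gamma,2)$ is phrased purely in terms of the edge/vertex combinatorics of the tree, so pullback $F_*$ is a well-defined isomorphism of the $g$-dimensional spaces.

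For part (ii), the key mechanism is that conjugacy inside $\PGL(2,k)$ is strictly stronger than conjugacy inside $\Aut(T)$, and the obstruction is precisely whether the tree isometry $F$ from part (i) is the restriction of a Möbius transformation. The strategy is to detect Möbius-ness through the canonical embedding. Via the isomorphisms $\mathrm{Poisson}(\ell)$ and $\mathrm{Res}(\ell)$, the harmonic measures $\Coc(\Gamma,2)$ correspond to $H^0(X_\Gamma,\Omega_{X_\Gamma})$, i.e.\ to sections cutting out the canonical embedding $X_\Gamma\hookrightarrow\PP^{g-1}$. The product maps $m_{\boldsymbol\ell}$ in diagram~\eqref{dia} translate, under the dictionary, into multiplication of differential forms, so their kernels $R_2,R_3,R_4$ are exactly the spaces of quadratic, cubic and quartic relations among the canonical generators, i.e.\ the degree $\le 4$ part of the homogeneous ideal $I_X$ of the canonical curve. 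The Babbage--Enriques--Petri theorem states that for a non-hyperelliptic curve this ideal is generated in those low degrees, with the classical exceptions (trigonal curves and plane quintics needing cubics, and $g=3$ needing quartics) accounting for exactly the refinements in the statement. Therefore $F_*$ respecting $R_2,R_3,R_4$ forces $F_*$ to induce a linear automorphism of $\PP^{g-1}$ carrying the canonical ideal of $X_1$ onto that of $X_2$, hence a projective isomorphism of canonical curves, hence $X_1\cong X_2$. Conversely a $\PGL(2,k)$-conjugacy induces a Möbius boundary map, an isometry $F$, and an honest isomorphism of curves, under which products of forms go to products of forms, so all relations are preserved.

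The main obstacle I anticipate is the faithful translation between the two worlds in part (ii): one must verify with care that the Teitelbaum Poisson kernel isomorphisms are compatible with products, so that the combinatorially defined kernels $R_\ell(\Gamma)$ genuinely coincide with the low-degree graded pieces of the canonical ideal, and that an isometry $F$ pulling back these kernels correctly really does arise from a linear map on $H^0(\Omega)$ rather than merely an abstract vector-space isomorphism. Equivalently, I must show that ``$F_*$ respects the multiplicative relations'' is the precise algebraic shadow of ``$F$ is Möbius.'' Handling the hyperelliptic exclusion and the precise list of when $R_3$ and $R_4$ are needed is where the classical Babbage--Enriques--Petri analysis must be imported verbatim, and checking that the exceptional trigonal/plane-quintic/$g=3$ cases line up on both sides of the dictionary is the delicate bookkeeping I expect to consume most of the effort.
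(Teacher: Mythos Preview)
Your proposal is correct and follows essentially the same approach as the paper: Coornaert-type tree rigidity plus Floyd's boundary construction for part~(i), and the Teitelbaum Poisson--Residue dictionary combined with Babbage--Enriques--Petri for part~(ii), with Mumford's theorem supplying the final step from $X_1\cong X_2$ to conjugacy in $\PGL(2,k)$. The obstacle you anticipate about compatibility of the Poisson/Residue maps with products is a non-issue, since the product of harmonic cocycles is \emph{defined} by the commutativity of diagram~\eqref{dia}, so $R_n(\Gamma)$ is by construction the preimage of $\ker(\mu_n)$ under $\mathrm{Poisson}(2)^{\otimes n}$ and there is nothing further to verify.
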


The theorem says that when the usual measure-theoretic property of absolute continuity with respect to Patterson-Sullivan measure is enhanced to include preservation of linear relations in three finite-dimensional vector spaces of harmonic measures, then rigidity of Mumford curves follows. Note that Mumford proved in (\cite{Mumford}, Corollary 4.11) that two Mumford curves with Schottky groups in $\PGL(2,k)$ are isomorphic over $K$ if and only if the corresponding Schottky groups are conjugate over $k$. 

We do not know at present how to deal with the hyperelliptic case. 

The proof of the theorem uses the theorem of Petri et al.\ on the equations for the image of the canonical 
embedding of a non-hyperelliptic curve. The Poisson kernel theorem of Teitelbaum is used to translate this algebro-geometric statement back into a statement about harmonic measures. 

The contents of the paper is as follows: in the first part, we prove a version of Coornaert's ergodic rigidity theorem for two different trees (the original theorem was for one tree), and apply it to the intersection dual graph of the special fibre of two Mumford curves---this will prove the first part of the theorem. Then we give the proof of the second part of the theorem.

\section{Measure-theoretic rigidity for trees and special fibre isomorphism}\label{trees}
Throughout, we only consider locally finite trees $T$ without ends, that is, trees such that for any vertex the valency is finite and the complement is disconnected. We equip the set of vertices of $T$ with a metric $d$ in which all edges are of length one. All graphs will be oriented. For a graph $G$, we denote the set of vertices with $\V=\V(G)$ and the set of edges with $\E=\E(G)$. We denote the set of edges from $x \in \V(G)$ to $y \in \V(G)$ with $\E(x,y)$. A \emph{basic subset} of a tree is a set containing all vertices and edges which are on the target side of a given edge, i.e., an edge $e\in\E(x,y)$ defines the basic set $A=A(e)$ where $v\in A$ if $d(v,y)<d(v,x)$. 

\begin{figure}[ht]
\begin{center}
\includegraphics[width=0.7\textwidth]{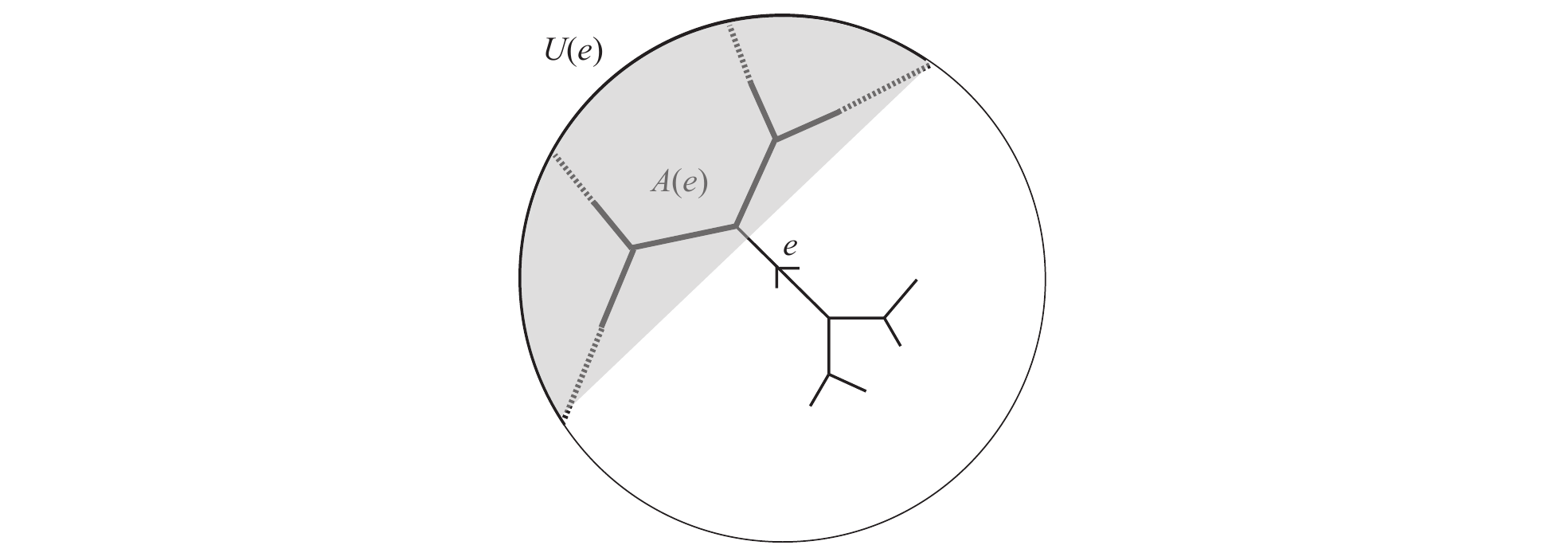}
\end{center}
\caption{Basic set $A(e)$ and compact open set $U(e)$}
\label{AeUe}
\end{figure}

\subsection*{Automorphism groups} Let $G=\Aut(T)$ be the automorphism group of a tree $T$. Equip $G$ with the topology in which the stabilizers of the vertices form the compact open neighborhoods of the identity element. Denote by $G^+$ the subgroup of $G$ of automorphisms which act without inversion, i.e., automorphisms which do not switch the orientation of any edge. The group $G^+$ can be partitioned into  so-called \emph{elliptic automorphisms} (which have a fixed vertex), and \emph{hyperbolic automorphisms} (which do not fix any vertex). 

We are concerned with certain discrete subgroups of $G^+$ called tree-Schottky subgroups. 

\begin{definition}
A subgroup $\Gamma$ of $G^+$ is called a \emph{tree-Schottky group} if it is finitely generated, and all non-identity elements are hyperbolic.
\end{definition}

In particular, a tree-Schottky group is a discrete and free group (see  Serre \cite{Serre} \textsection I.3.3). The rank of the group is called the \emph{genus} of the group. 
\begin{remark}\label{funddomain} Tree-Schottky groups satisfy remarkable geometric properties, which can be used to give the following second equivalent definition (for a proof of the equivalence see Lubotzky \cite{Lubotzky}, section 1).
A tree-Schottky group $\Gamma$ of rank $g$ is a free subgroup of $G^+$ such that for a choice of generators $\Gamma=\left\langle \gamma_1,...\gamma_g\right\rangle$ there exist distinct basic sets $A_i,B_i\subset T$ for $1\leq i\leq g$ satisfying:
\begin{align*}
\gamma_i(A_i) &=T-B_i\nn\\
\gamma_i^{-1}(B_i)&=T-A_i.
\end{align*}
\end{remark}

\subsection*{Patterson--Sullivan measures}
The \emph{boundary} $\partial T$ of the tree $T$ is defined as the space of geodesic rays $r:\Z_{\geq 0}\to \E(T)$ modulo the following equivalence relation: two rays $r, r'$ are considered equivalent if there are numbers $t_0, s$, such that for all $t>t_0$, $r(t)=r'(t+s)$.  The metric on  $\partial T$ with respect to a fixed base point $x_0\in T$ is defined as follows. Every boundary point $\xi\in\partial T$, has a unique representative ray $r:\Z_{\geq 0}\to E(T) $ with initial point of $r(0)$ equal to $x_0$. The image of $r$ is denoted by $[x_0,\xi]$.

\begin{figure}[ht]
\begin{center}
\includegraphics[width=0.9\textwidth]{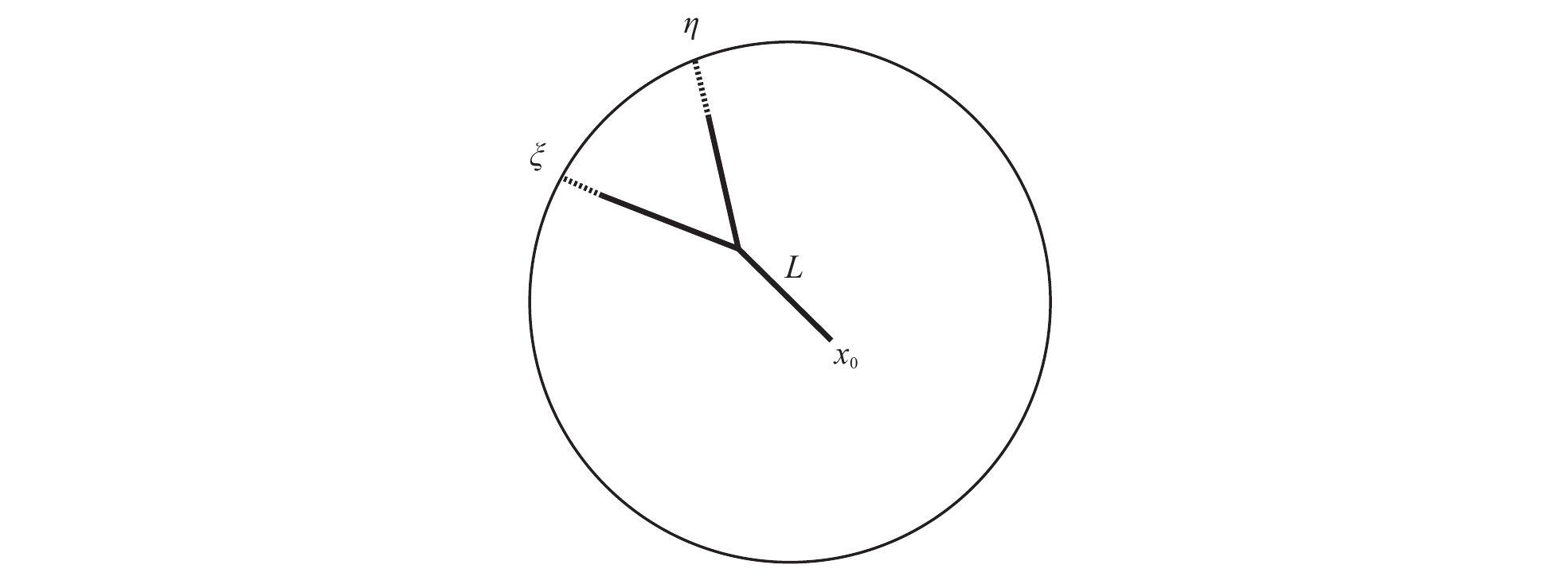}
\end{center}
\caption{Distance between boundary points}
\end{figure}

The \emph{distance between two boundary points} $\eta,\xi\in\partial T$ is given by
\[\rho_{x_0}(\eta,\xi)=e^{-L},\]
where $L$ is the length of $[x_0,\eta]\cap[x_0,\xi]$. We fix $x_0$ from now on and leave it out of the notation. 

This metric induces a topology on the boundary. The set of all boundary points that have a representative ray that is entirely contained in a basic set $A(e)$ is denoted by $U(e)$. One can check that $U(e)$ are compact open subsets of the boundary.  

An isometry $\gamma$ of $T$ induces a homeomorphism of $\partial T$ to itself.  The map measuring locally the dilatation by $\gamma$ is $$j_\gamma(\xi)=e^{d(x_0,p)-d(p,\gamma^{-1} x_0)},$$ where $p$ is the projection of $\xi$ on $[x_0,\gamma^{-1} x_0]$.

Let $\Gamma$ be a group of isometries acting properly discontinuously on $T$. A Borel measure $\mu$ on $\partial T$ is called $\Gamma$-conformal of dimension $d$, if for all $\gamma\in\Gamma$ $\gamma^*\mu=j^d_\gamma\mu$, where $\gamma^*\mu$ is defined by $\gamma^*\mu(A)=\mu(\gamma(A))$. In \cite{CoornaertPacific}, Coornaert constructs such a measure  completely analogously to the Patterson--Sullivan measures on the limit set of a Fuchsian group. That is
\[\mu(A)=\lim_{s\to e(\Gamma)}\frac{\sum_{y\in\Gamma(x)}e^{-sd(y,x_0)}\dirac(y)(A)}{\sum_{y\in\Gamma(x)}e^{-sd(y,x_0)}},\]
here $e(\Gamma)$ is the critical exponent for the Poincar\'e series (which is the denominator in the above formula). In more concrete terms, denote with $n_Y(R)$ the number of points in a orbit $Y$ within a distance $R$ from the basepoint $x_0$. Then 
\[e(\Gamma)=\limsup_{R\to\infty}\frac{1}{R}\log n_Y(R).\]
The critical exponent does neither depend on the orbit $Y$, nor on the basepoint $x_0$, and the measure $\mu$ is $\Gamma$-conformal of dimension $e(\Gamma)$. This measure however, does depend on the base point $x_0$, but measures with different base points are absolutely continuous with respect to each other. For a $\Gamma$-conformal measure $\mu$ of dimension $d$ on  $\partial T$, a $\Gamma$-invariant measure $\nu$ on $$\partial^2T=\{(\xi,\eta)|\xi,\eta\in\partial T, \xi\not=\eta\}$$ is given by the following formula:
\[
\nu=\frac{\mu\times\mu}{\rho(\xi,\eta)^{2d}}.
\]
In particular, $\nu$ is supported on the limit set $\Lambda_\Gamma\times\Lambda_\Gamma$, and is ergodic with respect to the group action on the limit set, (\cite{CoornaertPacific}, Theorem 7.7). In contrast to $\mu$, the measure $\nu$ does not depend on the base point.
 
\subsection*{Rigidity theorem for trees} These measures are used in the following rigidity theorem, which is a slight adaptation of the main theorem of Coornaert \cite{Coornaert} to the setting of two (a priori different) trees. 

\begin{theorem}\label{treerigidity}
For $i=1,2$, let $T_i$ be a complete locally compact tree without endpoints, and let $\Gamma_i$ be an isometry group acting properly discontinuously on $T_i$. Let $\mu_1$ and $\mu_2$ be $\Gamma_1$- resp. $\Gamma_2$-conformal measures on $\partial T_1$ resp. $\partial T_2$, of the same dimension $d\geq 0$, depending on reference vertices $x_0$, $y_0$ in $T_1$ an $T_2$ respectively. If there is a measurable bijection $\f:\partial T_1\to \partial T_2$ and a bijection $\alpha:\Gamma_1\to\Gamma_2$ satisfying the conditions:

\begin{description}
\item [equivariance] for $\mu_1$ almost all $\xi\in\partial T_1$ and for all $\gamma_1\in\Gamma_1$, $\f(\gamma_1\xi)=\alpha(\gamma_1)\f(\xi)$,
\item[non-singular] for all Borel sets $A\subset T_1$, $\mu_1(A)=0$ if and only if $\mu_2(\f(A))=0$, 
\item[ergodic] \label{ergodic} the action of $\Gamma_1$ on $(\partial^2 T_1,\nu_1)$ is ergodic, where $\nu_1=\frac{\mu_1\times\mu_1}{\rho_{x_0}(\xi,\eta)^{2d}}$,
\item[support] the support of $\mu_1$ is $\partial T_1$, \label{support}
\end{description}
then there exists a bijective isometry $F:T_1\to T_2$, such that $\f(\xi)=F(\xi)$ for $\mu_1$-almost all $\xi$ in $\partial T_1$. Moreover, $\alpha$ is a group isomorphism with $\alpha(\gamma)=F\na\gamma\na F^{-1}$ for all $\gamma$ in $\Gamma_1$.
\end{theorem}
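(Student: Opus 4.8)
The goal is to reconstruct an isometry $F\colon T_1\to T_2$ from the measurable boundary bijection $\varphi$, and this should be done by a Floyd/Coornaert-style encoding of vertices in terms of boundary data, followed by an ergodicity argument forcing $\varphi$ to agree almost everywhere with a genuine tree isometry. The plan is to follow Coornaert \cite{Coornaert} closely, taking care at each step to keep track of the fact that the source and target trees are now \emph{a priori} different, so that any formula that in the original was ``intrinsic to $T$'' must be split into a $T_1$-version and a $T_2$-version connected through $\varphi$ and $\alpha$.

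\medskip

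\textbf{Step 1: record the combinatorial data carried by the conformal measures.} First I would observe that, because $\mathrm{supp}(\mu_i)=\partial T_i$ and $\mu_i$ is conformal of dimension $d$, the measure of a boundary set $U(e)$ is comparable to $e^{-d\,\ell(e)}$, where $\ell(e)$ measures the distance of the edge $e$ from the base vertex; more precisely the cocycle $j_\gamma$ and the visual metric $\rho_{x_0}$ let one read off the combinatorial distance $d(x_0,x)$ between vertices from ratios of measures of nested compact-open boundary sets. The key point is that a vertex $x\in T_1$ is determined by the family of sets $\{U(e)\}$ for edges $e$ adjacent to $x$ together with the way $\partial T_1$ is partitioned by them, and the conformal density pins down the metric quantitatively. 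So the measure $\mu_1$ lets us recover the metric structure of $T_1$ from $\partial T_1$, and likewise $\mu_2$ recovers $T_2$ from $\partial T_2$.

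\medskip

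\textbf{Step 2: transport the partition structure across $\varphi$ and build $F$ on vertices.} Using the non-singularity hypothesis, $\varphi$ carries $\mu_1$-null sets to $\mu_2$-null sets and back, so it carries the boundary partition coming from edges of $T_1$ to a partition of $\partial T_2$ that is, up to null sets, compatible with the edge-partition of $T_2$. The equivariance $\varphi(\gamma_1\xi)=\alpha(\gamma_1)\varphi(\xi)$ then ties the $\Gamma_1$-action on $T_1$ to the $\Gamma_2$-action on $T_2$. From this I would define a candidate map $F$ on vertices by matching the nested families of compact-open sets $U(e)$ in $\partial T_1$ with their $\varphi$-images in $\partial T_2$; because both trees have their metrics encoded measure-theoretically and $\varphi$ is non-singular, $F$ should send each vertex of $T_1$ to a vertex of $T_2$ and preserve adjacency, hence be an isometry. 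One must also verify that $F$ is a bijection, which follows from running the same construction for $\varphi^{-1}$.

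\medskip

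\textbf{Step 3: ergodicity forces $\varphi=F$ almost everywhere and $\alpha=F\circ(-)\circ F^{-1}$.} This is where I expect the real obstacle to lie. Having produced \emph{a} boundary map $\partial F$ induced by the isometry $F$, I would show that $\partial F$ and $\varphi$ differ by a $\Gamma_1$-equivariant measurable self-map of $\partial T_1$; the equivariant, non-singular self-maps of $(\partial T_1,\mu_1)$ that preserve the measure class are then controlled by the ergodicity of the $\Gamma_1$-action on $(\partial^2 T_1,\nu_1)$. Concretely, the pair $(\xi,\eta)\mapsto(\partial F^{-1}\varphi(\xi),\,\partial F^{-1}\varphi(\eta))$ descends to a measurable $\Gamma_1$-invariant map on $\partial^2 T_1$, and ergodicity should pin it down to the identity on a set of full $\nu_1$-measure, whence $\varphi=\partial F$, i.e.\ $\varphi=F$, for $\mu_1$-almost all $\xi$. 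Once $\varphi$ agrees a.e.\ with the isometry $F$, substituting into the equivariance relation and using that $F$ is a genuine bijective isometry forces $\alpha(\gamma)\circ F=F\circ\gamma$ on a full-measure set of boundary points, and since a tree isometry is determined by its boundary action this gives the identity $\alpha(\gamma)=F\circ\gamma\circ F^{-1}$ everywhere; in particular $\alpha$ is a group homomorphism, and being a bijection it is an isomorphism.

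\medskip

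\textbf{Main difficulty.} The delicate part is Step 3: making rigorous the passage from ``$\varphi$ agrees with an isometry up to a $\Gamma_1$-equivariant measurable ambiguity'' to ``$\varphi$ \emph{equals} that isometry almost everywhere.'' This is exactly where the ergodicity of the diagonal action on $\partial^2 T_1$ is indispensable, and where one must be careful that all the a.e.-statements can be upgraded to the honest combinatorial identity on vertices, given that $T_1$ and $T_2$ are distinct trees rather than one tree with two group actions. The adaptation of Coornaert's single-tree argument to two trees is conceptually routine but requires rechecking that every intrinsic quantity used in his proof has been correctly bifurcated and relinked through the pair $(\varphi,\alpha)$.
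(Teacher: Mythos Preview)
Your plan has a genuine gap in Step~2, and it stems from misplacing the role of ergodicity.

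You claim that non-singularity of $\varphi$ lets you match the edge-partitions $\{U(e)\}$ of $\partial T_1$ with those of $\partial T_2$ up to null sets, and hence define $F$ on vertices. But non-singularity only says $\mu_1(A)=0 \Leftrightarrow \mu_2(\varphi(A))=0$; it gives no control over the \emph{shape} of $\varphi(U(e))$. There is no reason that $\varphi(U(e))$ should agree, up to a null set, with any set of the form $U(e')$ in $\partial T_2$, nor that measure ratios encoding distances are preserved. A measurable bijection in the same measure class can scramble the clopen structure completely. So your construction of $F$ in Step~2 has no footing.

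The paper's proof fixes exactly this by using ergodicity \emph{before} attempting to build $F$, not after. One first transfers ergodicity from $(\partial^2 T_1,\nu_1)$ to $(\partial^2 T_2,\nu_2)$ via $\varphi$ and non-singularity, then observes that the Radon--Nikodym derivative $h$ of $(\varphi\times\varphi)_*\nu_1$ against $\nu_2$ is $\Gamma_2$-invariant, hence a.e.\ constant. Unwinding the densities $\nu_i = \rho^{-2d}\,\mu_i\times\mu_i$ yields a function $f$ on $\partial T_1$ with
\[
\rho_{y_0}(\varphi(\xi_1),\varphi(\xi_2)) = f(\xi_1)f(\xi_2)\,\rho_{x_0}(\xi_1,\xi_2)
\]
for $\nu_1$-a.e.\ pairs. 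This is precisely the statement that $\varphi$ preserves the \emph{cross-ratio} on a full-measure set $W\subset\partial T_1$. Now one defines $F$ geometrically: a vertex $v\in T_1$ is the center of some triple $\xi_1,\xi_2,\xi_3\in W$, and $F(v)$ is the center of $\varphi(\xi_1),\varphi(\xi_2),\varphi(\xi_3)$ in $T_2$. Cross-ratio invariance makes this well-defined and an isometry (since $d(v,w)$ is read off from a cross-ratio), and $F=\varphi$ on $W$ is then immediate from the construction rather than requiring a further ergodicity argument.

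Your Step~3 is also shaky on its own terms: ergodicity of a group action tells you invariant \emph{functions} are a.e.\ constant, not that equivariant \emph{self-maps} are a.e.\ the identity. To run the argument you sketch, you would still need to produce an invariant scalar quantity that pins down the map---and the natural such quantity is again the cross-ratio. So the missing idea throughout is the cross-ratio and the Radon--Nikodym computation that shows $\varphi$ preserves it.
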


\begin{proof}
The main ingredient of the proof is the invariance of the \emph{cross-ratio} under $\f$. Recall that the cross-ratio $c(\xi_1,\xi_2,\xi_3,\xi_4)$ of four pairwise disjoint elements of $\partial T_1$ is the quotient
\begin{equation} \label{crossratio} c(\xi_1,\xi_2,\xi_3,\xi_4)=\frac{\rho(\xi_3,\xi_1) \cdot \rho(\xi_4,\xi_2)}{\rho(\xi_3,\xi_2) \cdot \rho(\xi_4,\xi_1)}.\end{equation}
This ratio is independent of $x_0$, and is in fact the same as 
\begin{equation} \label{crossequal}
c(\xi_1,\xi_2,\xi_3,\xi_4)=e^{-L},
\end{equation} 
where $L$ is the signed length of the intersection $[\xi_1,\xi_2]\cap[\xi_3,\xi_4]$, where the sign depends on the orientation of the geodesics.
\begin{figure}[ht]
\begin{center}
\includegraphics[width=0.4\textwidth]{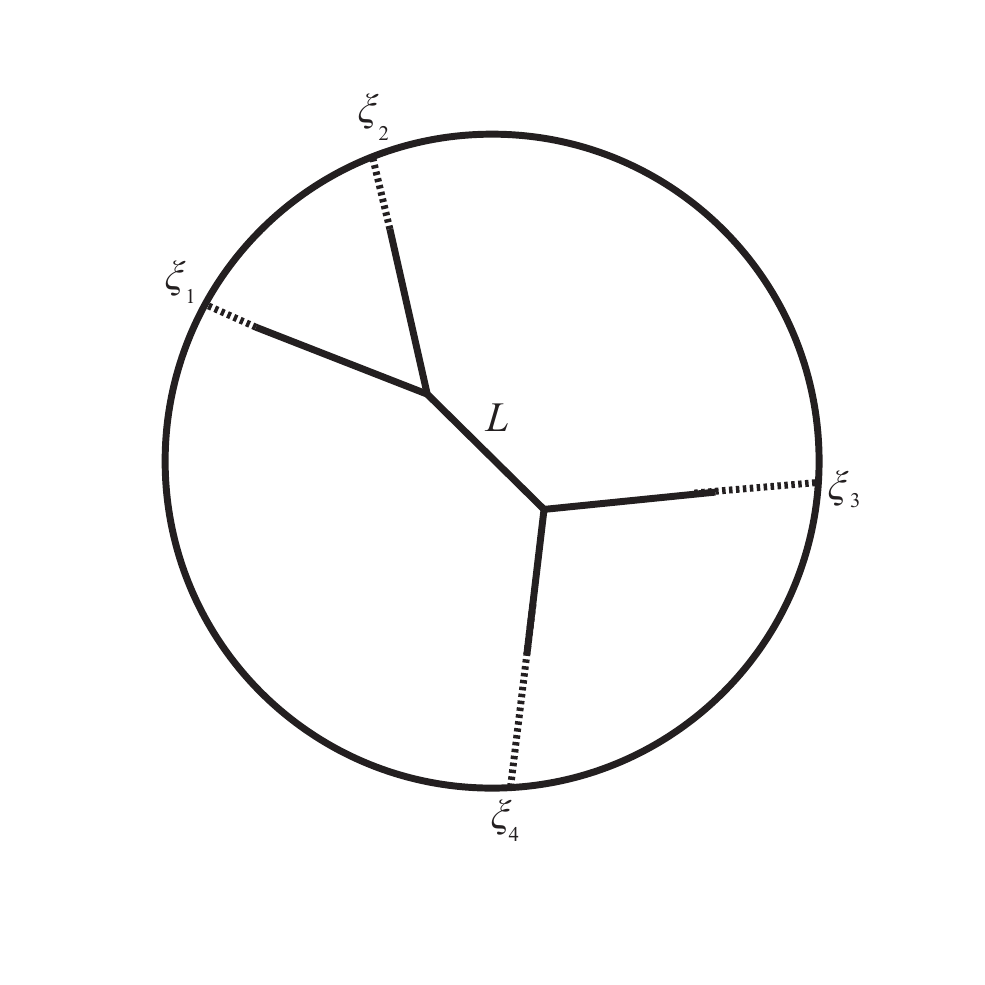}
\end{center}
\caption{Cross-ratio $c(\xi_1,\xi_2,\xi_3,\xi_4)=e^{-L}$}
\end{figure}

Note that $\Gamma_2$ acts ergodically on $(\partial^2 T_2,\nu_2)$; let $B\subset \partial^2T_2 $ be  $\Gamma_2$-invariant, then $(\f\times\f)^{-1}(B)$ is $\Gamma_1$-invariant, and hence, since $\nu_1$ is \textbf{ergodic}; 
\[\nu_1((\f\times\f)^{-1}(B))=0 \text{ or } \nu_1(\partial^2T_1-(\f\times\f)^{-1}(B))=0.\]
So in particular,
\[\mu_1\times\mu_1((\f\times\f)^{-1}(B))=0 \text{ or } \mu_1\times\mu_1(\partial^2T_1-(\f\times\f)^{-1}(B))=0.\]
Using (\textbf{non-singular}), we get 
\[\mu_2\times\mu_2(B)=0 \text{ or } \mu_2\times\mu_2(\partial^2T_2-B)=0,\]
and hence 
\[\nu_2(B)=0 \text{ or } \nu_2(\partial^2T_2-B)=0.\]
   
By (\textbf{non-singular}) the measure $(\f\times\f)_*\nu_1$ on $\partial^2 T_2$ is absolutely continuous with respect to $\nu_2$. Therefore, there exists a measurable function $h$, the Radon-Nikodym derivative, such that $(\f\times\f)_*\nu_1=h \nu_2$. The function $h$ is $\Gamma_2$-invariant, and hence by the ergodicity of $\nu_2$, the function $h$ is almost everywhere constant with respect to $\nu_2$. Likewise, let $g$ be a function on $\partial T_1$ such that $\f^{-1}_*\mu_2=g \mu_1$. Now let $A$ be any Borel set in $\partial^2 T_1$, and let $B=(\f\times \f)(A)\subset\partial^2 T_2$, then
\begin{eqnarray*} \frac{\mu_1\times\mu_1}{\rho_{x_0}(\xi,\eta)^{2d}}(A)&=&\nu_1(A)\\&=&(\f\times \f)_*\nu_1(B)\\&=&h\cdot\nu_2(B)\\&=&h\cdot\frac{\mu_2\times\mu_2}{\rho_{y_0}(\f(\xi),\f(\eta))^{2d}}(B)\\&=&g^2h\cdot\frac{\mu_1\times\mu_1}{\rho_{y_0}(\f(\xi),\f(\eta))^{2d}}(A). \end{eqnarray*}
We find a function $f=(g^2h)^{1/2d}$ such that the following equation holds $\nu_1$ almost everywhere:
\[\rho_{y_0}(\f(\xi_1),\f(\xi_2))=f(\xi_1)f(\xi_2)\rho_{x_0}(\xi_1,\xi_2).\]
It follows that the cross-ratio is invariant under $\f$ almost everywhere, i.e., there is a subset $W\subset\partial T_1$ of full $\mu_1$ measure on which $\f$ is well defined and preserves the cross-ratio. 
The tree $T_1$ is without ends and the support of $\mu_1$ is $\partial T_1$. We will first define the map $F$ for vertices $v\in\V(T_1)$ for which there exists a triple $\xi_1,\xi_2,\xi_3$ of pairwise disjoint elements of $W$ such that $v$ is the center of this triple, i.e., 
\[v=[\xi_1,\xi_2]\cap[\xi_1,\xi_3]\cap[\xi_2,\xi_3].\]
Define $F(v)$ to be:
\[F(v)=[\f(\xi_1),\f(\xi_2)]\cap[\f(\xi_1),\f(\xi_3)]\cap[\f(\xi_2),\f(\xi_3)].\]
Extend the map $F$ uniquely to a map $T_1\to T_2$ by linear interpolation along the rays.
It is  necessary  to prove that $F$ is well defined. Let $\xi_4\in W$ be such that $v$ is also the center of $\xi_1,\xi_2,\xi_4$. Then the projection of $\xi_4$ on 
\[[\xi_1,\xi_2]\cup[\xi_1,\xi_3]\cup[\xi_2,\xi_3]\]
lies on $[v,\xi_3]$. The invariance of the cross ratio shows that the projection of $\f(\xi_4)$ lies on $[F(v),\f(\xi_3)]$. 
\begin{figure}[ht]
\begin{center}
\includegraphics[width=0.9\textwidth]{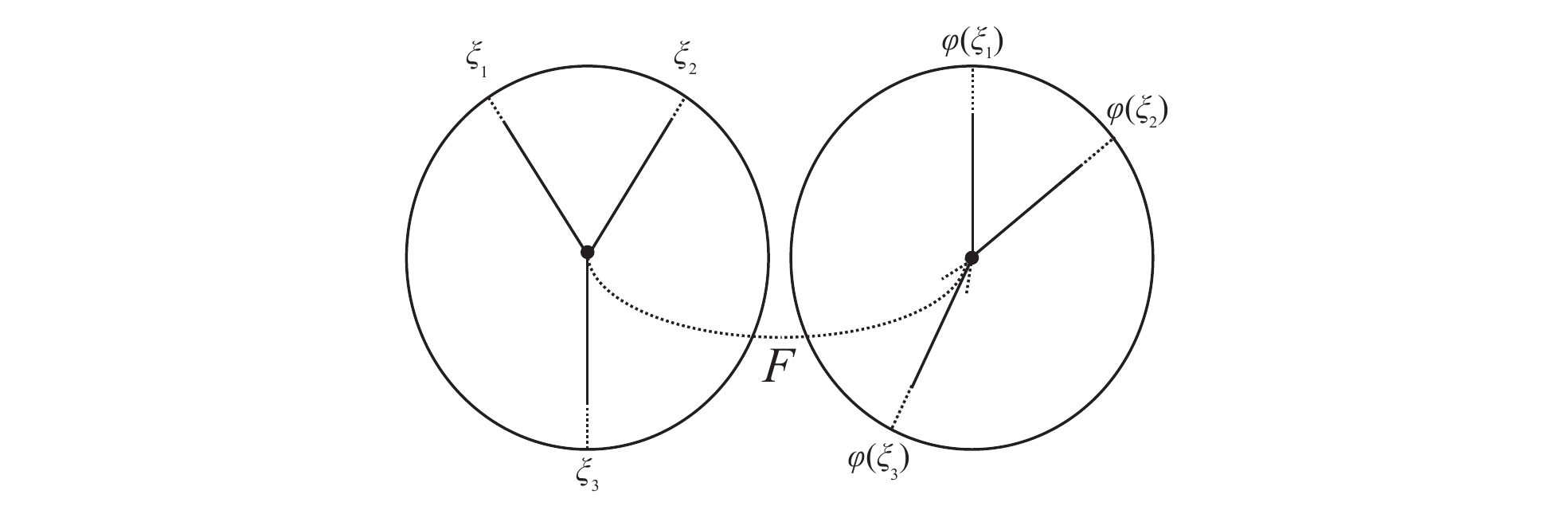}
\end{center}
\caption{Definition of isometry $F$}
\label{isometryF}
\end{figure}

The map $F$ is an isometry; given $v,w\in\V(T_1)$ there are two pair of points in $W$, say, $\xi_1,\xi_2$ and $\xi_3,\xi_4$ such that $[v,w]=[\xi_1,\xi_2]\cap[\xi_3,\xi_4],$ and then the distance between $v$ and $w$ is given by 
\begin{eqnarray*} d(v,w)&=&\ln(c(\xi_1,\xi_2,\xi_3,\xi_4))\\&=&\ln(c(\varphi(\xi_1),\varphi(\xi_2),\varphi(\xi_3),\varphi(\xi_4)))\\&=&d(F(v),F(w)).\end{eqnarray*}
 
For  $\xi\in W$, and hence $\mu_1$ almost everywhere it holds that $F(\xi)=\f(\xi)$. Moreover, the isometry of $T_2$ given by $F\na\gamma_1\na F^{-1}$ coincides with $\alpha(\gamma_1)$, for all $\gamma_1\in\Gamma_1$.
\end{proof} 

\subsection*{Existence of boundary morphism in the tree-Schottky case}
In this section, we prove that for two tree-Schottky groups of the same genus $g$ there is always an equivariant homeomorphism between the limit sets (i.e., maps $\varphi$ and $\alpha$ satisfying (1) as in Theorem \ref{treerigidity}). This is a consequence of the existence of a canonical identification between the limit set of a Schottky group and the Floyd boundary of the free group $F_g$, the latter being independent of the realisation of $F_g$ as Schottky group. 

First, recall the following definitions: A metric space $(X,d_X)$ is called hyperbolic if there exists a $\delta\geq0$ such that for any base point $x_0$ in $X$, $$(x\cdot y)\geq\min((x\cdot y)(y\cdot z))-\delta \text{ for any } x,y,z\in X.$$ Here, $(x\cdot y)$ denotes the Gromov product with respect to $x_0$ defined as $$(x\cdot y)=(d_X(x,x_0)+d_X(y,x_0)-d_X(x,y))/2.$$ This implies in particular that trees are hyperbolic spaces (with $\delta=0$). Secondly, a finitely generated group is called hyperbolic if its Cayley graph is hyperbolic. In particular, free groups are hyperbolic. The \emph{boundary $\partial G$ of a group $G$} is defined by Floyd in \cite{Floyd}: it is the complement of the Cayley graph of $G$ inside its completion, by considering it as a metric space for the ``Floyd metric'' that assigns to two adjacent words $w_1$ and $w_2$, (i.e., with $|w_1^{-1}w_2|=1$), the length $$1/\max\{|w_1|,|w_2|\}^2$$ (with $|\cdot|$ the usual word length, and the empty word having length zero).  Then we have: 

\begin{theorem}[\cite{Coornaertboek}, Theorem 4.1]\label{boundarymorphism}
Let $X$ be a proper geodesic space and let $\Gamma$ be an isometry group of $X$ which acts properly and discontinuously on $X$, and let the quotient of this action be compact. Then $\Gamma$ is hyperbolic if and only if $X$ is hyperbolic. Moreover, if $\Gamma$ is hyperbolic there is a canonical homeomorphism $\partial\Gamma\to\partial X$.
\end{theorem}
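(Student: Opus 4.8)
The statement combines the \v{S}varc--Milnor lemma with the quasi-isometry invariance of hyperbolicity and of the boundary. The plan is as follows. First I would fix a base point $x_0\in X$ and consider the orbit map $\iota\colon\Gamma\to X$, $\gamma\mapsto\gamma x_0$. Because $\Gamma$ acts by isometries, properly discontinuously and cocompactly on the proper geodesic space $X$, the \v{S}varc--Milnor argument applies: choosing a compact set $K$ with $\Gamma K=X$ and an $R$ with $K\subseteq\overline{B}(x_0,R)$, the set $S=\{\gamma\in\Gamma\colon d_X(x_0,\gamma x_0)\le 3R\}$ is finite (the orbit of $x_0$ is discrete and the action is proper), generates $\Gamma$, and the orbit map $\iota$ is a quasi-isometry from $(\Gamma,d_S)$, the group equipped with the word metric for $S$, to $X$. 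The required estimate of $d_X(x_0,\gamma x_0)$ against the word length $|\gamma|_S$ is obtained by subdividing a geodesic from $x_0$ to $\gamma x_0$ into segments of length $\le R$ and using the covering translates of $K$.

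Second, to prove the equivalence of hyperbolicity I would use that Gromov hyperbolicity of geodesic spaces is a quasi-isometry invariant. The essential input is the Morse (stability) lemma: in a hyperbolic space every $(\lambda,\varepsilon)$-quasi-geodesic lies within bounded Hausdorff distance of a genuine geodesic. Transporting the thin-triangles condition across $\iota$ then shows that $X$ is hyperbolic if and only if the Cayley graph of $(\Gamma,S)$ is, which is the first assertion.

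Third, assuming $\Gamma$ (equivalently $X$) hyperbolic, I would produce the boundary homeomorphism. A quasi-isometry between proper geodesic hyperbolic spaces induces a homeomorphism of Gromov boundaries: a quasi-geodesic ray acquires a well-defined endpoint by the Morse lemma, $\iota$ carries equivalent rays to equivalent rays, and the resulting bijection is bicontinuous because nearness in the boundary topology is controlled by the Gromov product, which $\iota$ distorts only boundedly. This gives a canonical homeomorphism from the Gromov boundary of $\Gamma$ to $\partial X$. It then remains to identify the Gromov boundary of $\Gamma$ with the Floyd boundary $\partial\Gamma$ of the definition above: for a hyperbolic group the Floyd completion and the Gromov compactification coincide, so the two notions of $\partial\Gamma$ agree.

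The main obstacle is the Morse/stability lemma, which simultaneously underlies the quasi-isometry invariance of hyperbolicity and the very existence and continuity of the boundary map; everything else is bookkeeping with the Gromov product. A secondary subtlety specific to this formulation is reconciling the Floyd boundary used in the paper with the Gromov boundary, and verifying that the homeomorphism is genuinely \emph{canonical}, i.e. independent of the auxiliary choices of $x_0$ and of $S$. This last point follows because different choices yield boundedly close quasi-isometries, which induce the identity on the boundary.
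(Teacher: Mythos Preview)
Your outline is the standard \v{S}varc--Milnor plus Morse-lemma argument, and it is correct. Note, however, that the paper does not give its own proof of this statement: the theorem is quoted verbatim from \cite{Coornaertboek} and used as a black box. The only trace of a proof in the paper is the Remark immediately following the statement, which spells out the orbit map $\varphi\colon\Cay(F_g,A)\to T$, $w\mapsto\rho(w)(x_0)$, and asserts that it extends to a homeomorphism of boundaries --- exactly the construction you describe. So there is nothing substantive to compare; your approach is the expected one and agrees with the paper's sketch.
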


\begin{remark} In our case, let $F_g$ denote a free group of rank $g$, let $A=\{A_1,...,A_g\}$ denote a set of generators, and let \[\rho: F_g\to \Aut^+(T)\]
be a faithful representation, such that the image $\rho(F_g)=\Gamma$ is a tree-Schottky group for the tree $T$. Denote by $\Cay(F_g, A)$ the Cayley graph with respect to the given generating set $A$, pick a base point $x_0$ and define
\[\varphi: \Cay(F_g,A)\to T,w\mapsto \rho(w)(x_0).\]
The boundary morphism is obtained by extending this map to the completions on both sides, and then restricting to the respective boundaries, giving a map 
$$ \varphi \colon \partial F_g \isomto \partial T. $$ 
\end{remark}

The above theorem indeed implies our claim: 

\begin{corollary}\label{phiexists}
For two tree-Schottky groups $\Gamma_1, \Gamma_2$ of the same rank $g \geq 2$, and acting on two trees $T_1, T_2$, there is a  group isomorphism $\alpha \colon \Gamma_1 \rightarrow \Gamma_2$ and a homeomorphism $\f:\Lambda_{\Gamma_1}\to\Lambda_{\Gamma_2}$ between the respective limit sets, that is equivariant w.r.t.\ $\alpha$. \end{corollary}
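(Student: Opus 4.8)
The plan is to realize both limit sets as boundaries of the abstract free group $F_g$ via Floyd's construction, and then to compose the two identifications. Since a tree-Schottky group of genus $g$ is free of rank $g$ (by Serre's theorem, as recalled after the definition of tree-Schottky group), I would fix free generating sets and thereby obtain group isomorphisms $\rho_i \colon F_g \isomto \Gamma_i$ for $i=1,2$. The sought isomorphism is then simply $\alpha = \rho_2 \circ \rho_1^{-1} \colon \Gamma_1 \isomto \Gamma_2$.

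Next I would apply Theorem \ref{boundarymorphism} with $X = T(\Lambda_{\Gamma_i})$, the subtree spanned by the limit set. The point is that $T(\Lambda_{\Gamma_i})$ is a proper geodesic space (hence hyperbolic, with $\delta = 0$), being a locally finite subtree of the locally compact tree $T_i$; it is $\Gamma_i$-invariant because $\Lambda_{\Gamma_i}$ is; and $\Gamma_i$ acts on it properly discontinuously with \emph{compact} quotient, since $\Gamma_i \backslash T(\Lambda_{\Gamma_i})$ is a finite graph. As $F_g$ is hyperbolic, the theorem produces a canonical homeomorphism $\f_i \colon \partial F_g \isomto \partial T(\Lambda_{\Gamma_i})$, and I would identify $\partial T(\Lambda_{\Gamma_i}) = \Lambda_{\Gamma_i}$, since the boundary of the subtree spanned by the limit set is exactly the limit set. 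Setting $\f = \f_2 \circ \f_1^{-1}$ then yields a homeomorphism $\Lambda_{\Gamma_1} \isomto \Lambda_{\Gamma_2}$.

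The remaining, and genuinely load-bearing, step is equivariance. Here I would use the explicit description of $\f_i$ from the preceding Remark: it is the boundary extension of the orbit map $\Cay(F_g, A) \to T(\Lambda_{\Gamma_i})$, $w \mapsto \rho_i(w)(x_0)$, for a base vertex $x_0 \in T(\Lambda_{\Gamma_i})$. This orbit map intertwines the left-translation action of $F_g$ on its Cayley graph with the $\Gamma_i$-action on the tree through $\rho_i$, because $\rho_i(hw)(x_0) = \rho_i(h)(\rho_i(w)(x_0))$; passing to the completion and restricting to the boundary gives $\f_i(h \cdot \xi) = \rho_i(h)\,\f_i(\xi)$ for all $h \in F_g$ and $\xi \in \partial F_g$. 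Writing a general point of $\Lambda_{\Gamma_1}$ as $\f_1(\xi)$ and a general element of $\Gamma_1$ as $\rho_1(h)$, the computation $\f(\rho_1(h)\cdot\f_1(\xi)) = \f_2(h\cdot\xi) = \rho_2(h)\,\f_2(\xi) = \alpha(\rho_1(h))\cdot\f(\f_1(\xi))$ then gives exactly the desired equivariance.

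The main obstacle I anticipate is not the formal composition but confirming the two geometric inputs that make Theorem \ref{boundarymorphism} applicable: that $\Gamma_i$ acts \emph{cocompactly} on $T(\Lambda_{\Gamma_i})$ (equivalently, that the quotient graph is finite), and that the Floyd boundary of $F_g$ matches the tree boundary $\Lambda_{\Gamma_i}$ equivariantly under the orbit map. Both hold for tree-Schottky groups, but they require passing from the ambient tree $T_i$ to the spanned subtree $T(\Lambda_{\Gamma_i})$, on which the action is cocompact, rather than working with $T_i$ itself, where Theorem \ref{boundarymorphism} would not apply.
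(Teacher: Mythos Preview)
Your proposal is correct and follows essentially the same route as the paper: pass to the subtree $T(\Lambda_{\Gamma_i})$ spanned by the limit set so that the action becomes cocompact, apply Theorem \ref{boundarymorphism} to obtain $\f_i \colon \partial F_g \isomto \Lambda_{\Gamma_i}$, and set $\alpha = \rho_2 \circ \rho_1^{-1}$ and $\f = \f_2 \circ \f_1^{-1}$. Your explicit verification of equivariance via the orbit-map description is more detailed than the paper's, which simply asserts it holds ``by construction.''
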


\begin{proof}
Let $T_{\Gamma_i}$ be the subtree of the $T_i$ containing all geodesics connecting the elements of the limit set $\Lambda_{\Gamma_i}$. Let $\rho_i \colon F_g \hookrightarrow \Aut^+(T_i)$ denote the representation of the free group $F_g$ on $g$ letters; then $\alpha:=\rho_2 \circ \rho_1^{-1}$ is a homomorphism between $\Gamma_1$ and $\Gamma_2$.

  The quotient graph $\Gamma_i\backslash T_{\Gamma_i}$ is finite, and hence compact. Hence all requirements of Theorem \ref{boundarymorphism} are fulfilled, and therefore we obtain two homeomorphisms $\f_i:\partial F_g\to\Lambda_{\Gamma_i}$. One can then set $\f=\f_2\na\f_1^{-1}$, and this is equivariant w.r.t.\ $\alpha$ by construction. 
  $$ \xymatrix{    & & \Lambda_{\Gamma_1} \ar[dd]^{\varphi} \\ \partial {F}_g
\ar[urr]^{{\varphi_1}} \ar[drr]^{{\varphi_2}} & \\ & &
\Lambda_{\Gamma_2}}$$
\end{proof}

\subsection{Rigidity theorem for tree-Schottky groups}

We end this section with a theorem on the rigidity of tree-Schottky groups.
 
\begin{theorem}\label{treerigidity2}
Let $T_i$ be trees $(i=1,2)$ and let $\Gamma_i$ be corresponding tree-Schottky groups, both of genus $g$, such that $\Lambda_{\Gamma_i}=\partial T_i$. Then the following are equivalent:
\begin{enumerate}
\item\label{een} The quotient graphs $\Gamma_i\backslash T_i$ are isomorphic.
\item\label{twee} There is an isomorphism $\alpha:\Gamma_1\to\Gamma_2$ and a bijective isometry $F:T_1\to T_2$ such that for all $\gamma\in\Gamma_1$,
\[F\na\gamma=\alpha(\gamma)\na F\]
\item\label{drie} There is an isomorphism $\alpha:\Gamma_1\to\Gamma_2$ and an equivariant homeomorphism $\f:\Lambda_{\Gamma_1}\to\Lambda_{\Gamma_2}$ which is absolutely continuous with respect to the respective Patterson--Sullivan measures. Moreover, these measures are of the same dimension.
\end{enumerate}
\end{theorem}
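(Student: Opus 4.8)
The plan is to prove the two equivalences (1)$\Leftrightarrow$(2) by covering-space theory and (2)$\Leftrightarrow$(3) by invoking the tree rigidity theorem (Theorem \ref{treerigidity}), so that all three statements become equivalent.

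For (2)$\Rightarrow$(1) I would simply descend: an isometry $F$ with $F\na\gamma=\alpha(\gamma)\na F$ carries $\Gamma_1$-orbits to $\Gamma_2$-orbits, hence induces a well-defined graph isomorphism $\Gamma_1\backslash T_1\to\Gamma_2\backslash T_2$. For the converse (1)$\Rightarrow$(2) I would use that every non-identity element of a tree-Schottky group is hyperbolic and acts without inversion, so $\Gamma_i$ acts freely on $T_i$; therefore $T_i\to\Gamma_i\backslash T_i$ is the universal covering of a finite graph and $\Gamma_i\cong\pi_1(\Gamma_i\backslash T_i)$ (cf.\ Serre \cite{Serre}). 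A graph isomorphism of the quotients then induces an isomorphism $\alpha$ of fundamental groups and lifts, after choosing compatible base vertices, to an isometry $F:T_1\to T_2$ of universal covers intertwining the deck groups, i.e.\ $F\na\gamma=\alpha(\gamma)\na F$.

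For (2)$\Rightarrow$(3) I would restrict $F$ to the boundary to obtain an equivariant homeomorphism $\f:=\partial F\colon\Lambda_{\Gamma_1}\to\Lambda_{\Gamma_2}$. Since $F$ maps $\Gamma_1$-orbits isometrically onto $\Gamma_2$-orbits, the orbit-counting functions $n_Y(R)$ agree, so the critical exponents coincide, $e(\Gamma_1)=e(\Gamma_2)$, and the two Patterson--Sullivan measures have the same dimension. Because these measures are built canonically from the metric and the orbit counting, $\f_*\mu_1$ agrees with $\mu_2$ up to the (base-point) Radon--Nikodym factor, so $\f$ is absolutely continuous.

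The substantive direction is (3)$\Rightarrow$(2), where I would feed the given equivariant, absolutely continuous $\f$ into Theorem \ref{treerigidity}. The four hypotheses are checked as follows: \textbf{equivariance} is part of (3); \textbf{non-singular} is precisely the (mutual) absolute continuity of $\f$; \textbf{ergodic} is Coornaert's ergodicity of the $\Gamma_1$-action on $(\partial^2T_1,\nu_1)$ (\cite{CoornaertPacific}, Theorem 7.7) recalled above; and \textbf{support} holds because $\Lambda_{\Gamma_1}=\partial T_1$ by hypothesis and Patterson--Sullivan measure has full support on the limit set. The equality of dimensions required by Theorem \ref{treerigidity} is exactly the last clause of (3). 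The theorem then yields a bijective isometry $F$ with $\alpha(\gamma)=F\na\gamma\na F^{-1}$, which is statement (2). I expect the main obstacle to be bookkeeping at this last step: one must read ``absolutely continuous'' as mutual absolute continuity so that the non-singular condition holds in both directions, and one must invoke the standing assumption $\Lambda_{\Gamma_i}=\partial T_i$ to guarantee full support, since without it the Patterson--Sullivan measure would be concentrated on a proper subset of the boundary and Theorem \ref{treerigidity} would not apply.
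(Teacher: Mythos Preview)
Your proposal is correct and follows essentially the same logical scheme as the paper: the implications $(3)\Rightarrow(2)$ via Theorem~\ref{treerigidity} and $(2)\Rightarrow(1)$ by descending to orbits are identical to the paper's. The one genuine difference is in closing the cycle. The paper proves $(1)\Rightarrow(3)$ directly by an explicit construction: it first subdivides edges to make the quotient graphs combinatorial, then chooses a maximal subtree $P_1\subset G_1$ as a fundamental domain, lifts it to $T_1$, reads off generators $\gamma_1,\dots,\gamma_g$ from how the lifted tree is glued, transports everything through $f$ to $G_2$ and $T_2$, and assembles an explicit equivariant isometry $\phi:T_1\to T_2$ whose boundary extension is the desired $\varphi$. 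You instead prove $(1)\Rightarrow(2)$ by pure covering-space theory (the $\Gamma_i$ act freely since every nontrivial element is hyperbolic, so $T_i$ is the universal cover of the finite graph $\Gamma_i\backslash T_i$ and a graph isomorphism lifts to an equivariant isometry), and then obtain $(3)$ from $(2)$ by restricting $F$ to the boundary.

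Your route is shorter and more conceptual; the paper's explicit fundamental-domain argument has the virtue of displaying the matching of generators $\gamma_i\mapsto\delta_i$ concretely, which makes the isomorphism $\alpha$ and the equality of orbit-counting functions $n_Y(R)=n_{\phi(Y)}(R)$ (hence $e(\Gamma_1)=e(\Gamma_2)$) completely transparent. Both arguments ultimately rest on the same fact---freeness of the action---so this is a difference of presentation rather than of substance.
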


\begin{proof}
Statement (\ref{drie}) implies (\ref{twee}): this is a direct consequence of Theorem \ref{treerigidity}. We check that all conditions of Theorem \ref{treerigidity} are satisfied. The homeomorphism $\f$ is equivariant and absolutely continuous by assumption. The Patterson--Sullivan measures are of the same dimension an the Patterson--Sullivan measure of the first group is supported on $\Lambda_{\Gamma_1}=\partial T_1$. As noted above, the group $\Gamma_1$ acts ergodically on $\Lambda_{\Gamma_1}^2=\partial^2T_1$.

Statement (\ref{twee}) implies (\ref{een}): suppose there is an isometry as in (\ref{twee}), and let $y_1\in\V(T_1)$. Then for any $y\in\Gamma y_1$ it holds that $$F(y)=F(\gamma(y_1))=\alpha(\gamma)F(y)\in\Gamma_2 F(y).$$ Therefore, $F$ sends orbits of vertices to orbits of vertices. Moreover, because $F$ is an isometry, it also sends orbits of edges to orbits of edges. Hence, $F$ induces an isomorphism between the quotient graphs. 

What is left to prove is that (\ref{een}) implies (\ref{drie}). For this, denote $G_i=\Gamma_i\backslash T_i$, and let $f:G_1\to G_2$ be the given isomorphism. 

A graph $G$ is called \emph{combinatorial} if $\E(x,y)$ contains at most one edge and if $\E(x,x)=\emptyset$ for all $x\in\V(G)$. If $G_1$ is not combinatorial we make it combinatorial by replacing every edge by three edges while adding two new vertices, i.e.,

\[\xymatrixcolsep{4cm}\xymatrix{\bullet\ar@{-}[r]&\bullet}\mbox{ is replaced by }\xymatrixcolsep{1cm}\xymatrix{\bullet\ar@{-}[r]&\bullet\ar@{-}[r]&\bullet\ar@{-}[r]&\bullet}\]

We do this simultaneously with $T_1$ and denote the new graphs with $\bar G_1$, and $\bar T_1$. There is a unique way of extending the action of $\Gamma_1$ on $T_1$ to $\bar T_1$, and the quotient $\Gamma_1\backslash \bar T_1$ is $\bar G_1$. Repeat the procedure with $G_2$, such that $f$ extends to an isomorphism $\bar f:\bar G_1\to\bar G_2$. From now on we will assume that the quotient graphs are already combinatorial. Let $P_1\subset G_1$ be a maximal subtree with $2g$ endpoints. The tree $T_1$ is the universal covering of $G_1$, and hence there is a lift $\ell_1:P_1\hookrightarrow T_1$, which is unique once one point of the lift is chosen. Moreover, $\ell(P_1)$ is a fundamental domain for $\Gamma_1$, and therefore there are $g$ group elements $\gamma_1,...,\gamma_g$ of $\Gamma_1$ and there is a labeling $a_1,b_1,\dots,a_g,b_g$ of the end points of $P_1$ such that for $1\leq i\leq g$, \[d(\gamma_i(\ell_1(a_i)),\ell_1(b_i))=1.\] The group $\Gamma_1$ is generated by $\gamma_1,\dots,\gamma_g$. Let $P_2=f(P_1)$, which is a maximal subtree of $G_2$. Again, after the choice of one point there is a unique lift $\ell_2:P_2\hookrightarrow T_2$. Moreover, there are $g$ elements $\delta_1,\dots,\delta_g$ generating $\Gamma_2$ and satisfying 
\[d(\delta_i(\ell_2\na f(a_i)),\ell_2\na f(b_i))=1.\]
There are representations 

\[\rho_1:F_g\to \Isom(T_1) \text{ and }  \rho_2:F_g\to \Isom(T_2),\] such that 

\[\rho_2\circ\rho_1^{-1}(\gamma_i)=\alpha(\gamma_i)=\delta_i\] induces the isomorphism $\alpha:\Gamma_1\to\Gamma_2$. The corresponding boundary morphism is the continuation to the boundary of the map 
\[\phi:T_1\to T_2:x\mapsto\alpha(w_x)(\ell_2 f\ell_1^{-1}(\tilde x)),\]
where $\tilde x$ is the unique element in $\ell_1(P_1)$ for which there is an unique word $w_x$ in $\gamma_1,\gamma_1^{-1},\dots,\gamma_g,\gamma_g^{-1}$ such that $w_x(\tilde x)=x$. The map $\phi$ is a surjective isometry, and therefore the induced boundary morphism is absolutely continuous with respect to the Patterson--Sullivan  measures. Moreover, since $\phi$ sends orbits to orbits, it holds for all $R\geq 0$, that $n_Y(R)=n_{\phi(Y)}(R)$, and hence $e(\Gamma_1)=e(\Gamma_2)$.
\end{proof}

\section{Mumford curves}
We now turn our attention to the setting of algebraic curves. Let $k$ be a discrete valued field, with valuation $v_k$, and denote with $\mathcal O_k$ the ring of integers of $k$. Let $\pi$ be an uniformizer and let $\bar k$ be the residue field. The field $K$ is a complete algebraically closed field extension of $k$. 

The group $G=\PGL(2,k)$ acts on $\PP^1(K)$ through linear fractional transformations:
\[\left(\begin{array}{cc}a&b\\c&d\end{array}\right)\cdot z=\frac{az+b}{cz+d},\]
leaving the space of  $k$-rational points $\PP^1(k)$ invariant. A group element $g\in G$ is called \emph{hyperbolic} if the two eigenvalues have different valuation. Hyperbolic group elements have two fixed points, both lying in $\PP^1(k)$.

\subsection*{Schottky groups}
Analogous to tree-Schottky groups we will define Schottky groups, which are certain subgroups of $\PGL(2,k)$.

\begin{definition}
 A subgroup $\Gamma$ of $\PGL(2,k)$ is called a Schottky group if 
\begin{enumerate}
 \item $\Gamma$ is finitely generated; 
 \item all non-identity elements $\gamma\in\Gamma$ are hyperbolic.
\end{enumerate}
\end{definition}
\begin{definition}
The quotient space $X_\Gamma=\Gamma\backslash(\PP^1(K)-\Lambda_\Gamma)$ is a well-defined one-dimensional compact rigid analytic space, and as such, it is the unique analytification of an algebraic curve over $K$. We will not distinguish the analytic and algebraic curve, and call it simply a \emph{Mumford curve}.
\end{definition}

\subsection*{The tree $T(\Lambda_\Gamma)$}

There is a tree $T(\Lambda_\Gamma)$ related to $\Lambda_\Gamma$ in a natural way, cf.\ (\cite{GvdP}, chapter 1, \S 2). First, we consider the Bruhat-Tits tree $\T$ of $\PGL(2,k)$. This is a $q+1$-regular tree (with $q$ the cardinality of the residue field of $k$), which has a natural action by $\PGL(2,k)$ induced by its interpretation as equivalence classes of rank two lattices over the ring of integers of $k$. In this way, the boundary of the Bruhat-Tits tree also becomes naturally identified with $\PP^1(k)$. 

Given a Schottky group $\Gamma$, we let $T(\Lambda_\Gamma)$ denote the subtree of $\T$ \emph{spanned} by the limit set in the following sense: the subtree is the union of all vertices and edges that occur in geodesics $[\xi_1,\xi_2]
$ with $\xi_1 \in \Lambda_\Gamma$ and $\xi_2 \in \Lambda_\Gamma$. A map on the boundary that preserves the cross ratio (in the sense of formula (\ref{crossratio})) defines an isometry of the tree by the construction from Figure \ref{isometryF}. Conversely, an isometry of the tree preserves the cross ratio by formula (\ref{crossequal}). By construction, a matrix $\gamma \in \Gamma \subseteq \PGL(2,k)$ acts on the Bruhat-Tits tree by isometries (and its subtree $T(\Lambda_\Gamma)$), and this induces an action of $\gamma$ on the boundary, which is the natural action on $\PP^1(k)$.    

Since the action of the group $\Gamma$ is isometric, it is a tree-Schottky group for this action. The boundary of the tree $T(\Lambda_\Gamma)$ is $\Lambda_\Gamma$ by construction. The action of $\Gamma$ on $T(\Lambda_\Gamma)$ captures information about the curve $T(\Lambda_\Gamma)$ in the following sense. The quotient graph $\Gamma\backslash T(\Lambda_\Gamma)$ is the intersection graph of the special fibre of $X_\Gamma$, (\cite{Mumford} \S 3, \cite{GvdP}, remark 2.12.3). In particular, the quotient is a finite graph.

\subsection*{The curve and the tree}
As stated above, the curve $X_\Gamma$ and the tree $T(\Lambda_\Gamma)$ are strongly related. Another instance of this was pointed out by Teitelbaum \cite{Teitelbaum}: the vector spaces of rigid analytic modular forms on $\PP^1(K)-\Lambda_{\Gamma}$ are isomorphic to spaces of harmonic cocycles on the edges of the tree $T(\Lambda_\Gamma)$. We recall the notions involved and then describe the isomorphism.

\begin{definition}
Let $n$ be an even integer. Denote by $\Sp_n(\Gamma)$ the space of \emph{rigid analytic modular forms} on $\PP^1(K)-\Lambda_{\Gamma}$ of weight $n$ for $\Gamma$ over $K$, i.e., $\Sp_n(\Gamma)$ comprises the rigid analytic functions $f:\PP^1(K)-\Lambda_{\Gamma}\to K$ such that 
\[f(\gamma z)=\frac{(cz+d)^n}{(\det(\gamma))^{n/2}}f(z).\]
\end{definition}

\begin{remark}
The space of weight two rigid analytic modular forms for a Schottky group $\Gamma$ is isomorphic to the space of holomorphic 2-forms on the corresponding Mumford curve: 
$$ \Sp_2(\Gamma) \to H^0(X_\Gamma,\Omega_{X_\Gamma}) $$
by mapping a modular form $f$ to the differential form corresponding to $f(z)dz$ on $\PP^1(K)-\Lambda_{\Gamma}$. More generally, the space of weight $2\ell$ analytic modular forms is isomorphic to the space of holomorphic $\ell$-forms:
\[\Sp_{2\ell}(\Gamma) \to H^0(X_\Gamma,\Omega_{X_\Gamma}^{\otimes \ell}),\]
with the isomorphism induced by $f \mapsto f dz^{\otimes \ell}$.  
\end{remark}
 
\begin{definition} Let $n$ be an positive even integer. 
Denote with $P_{n}[X]\subset K[X]$ the vector space of polynomials of degree at most $n$. 
A $P_n[X]$-valued function $c$ on the edges of the tree $T(\Lambda_\Gamma)$ is called a \emph{$\Gamma$-harmonic cocycle} if
\begin{enumerate}
\item for all edges $e$, $c(e)=-c(\bar e)$, where $\bar e$ is the inverted orientation of $e$.
\item for all vertices $v$, $\sum_{e\mapsto v}c(e)=0$, where the sum is over the oriented edges with target $v$. \label{somisnul}
\item $c$ is equivariant with respect to the $\Gamma$-action, i.e., 
\[c(\gamma e)(X)=\gamma\cdot c(e)(X).\] 
Here $\gamma\in\Gamma$ acts on $p\in P_n[X]$ by:
\[\gamma\cdot p(X)=\frac{(bX+d)^n}{(\det(\gamma))^{n/2}}\cdot p(\frac{aX+c}{bX+d}).\]
\end{enumerate}
The set of harmonic $\Gamma$-cocycles with values in $P_{n-2}[X]$ is a vector space over $K$ denoted by $\Coc(\Gamma,n)$.
\end{definition}

In \cite{Schneider} (page 228), Peter Schneider constructed distributions on the boundary of the tree based on these harmonic cocycles. Recall that a distribution is a finitely additive function on the compact opens. Let $c\in\Coc(\Gamma,2)$. Then the corresponding distribution $\mu_c$ is defined by the following fundamental relation:
\begin{eqnarray*}
\mu_c(U(e))&=&c(e),
\end{eqnarray*}
where $U(e)$ is the compact open corresponding to the basic set $A(e)$, as in Figure \ref{AeUe}. 

These distributions are only finitely additive, because no convergence over infinite sums can be guaranteed. They can be seen as measures of finite unions of sets of the form $U(e)$. In particular, by property (\ref{somisnul}), the ``measure'' of the entire space is $\mu_c(\Lambda_\Gamma)=0$. It was found by Teitelbaum that one can use the integration theory of Amice-V\'elu and Vishik to integrate with respect to such distributions, and to construct a Poisson kernel for modular forms: 

\begin{theorem}[Teitelbaum \cite{Teitelbaum}]\label{poissonkernel}
For any Schottky group $\Gamma$ in $\PGL(2,k)$ and any even positive integer $n$, there are explicit back and forth isomorphisms $(\mathrm{Poisson}(n),\Res(n))$:
\[\xymatrix{ \Coc(\Gamma,n)  \ar@/^1.5pc/[r]^{\mathrm{Poisson(n)}}   & H^0(X_\Gamma,\Omega_{X_\Gamma}^{\otimes n/2})   \ar@/^1.5pc/[l]^{\mathrm{Res}(n)}  }\]
of $K$-vector spaces.
\end{theorem}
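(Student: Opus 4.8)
The plan is to construct the two maps explicitly and verify that they are mutually inverse $K$-linear isomorphisms; the combinatorial checks are routine once one fixes the dictionary between the tree $T(\Lambda_\Gamma)$ and an admissible covering of $\PP^1(K)-\Lambda_\Gamma$, and the real content sits in the analytic construction of $\mathrm{Poisson}(n)$. Throughout I identify a weight-$n$ modular form with a holomorphic $n/2$-differential $\omega=f\,dz^{\otimes n/2}$ via the isomorphism $\Sp_n(\Gamma)\cong H^0(X_\Gamma,\Omega_{X_\Gamma}^{\otimes n/2})$ recalled above.

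I would first build $\Res(n)$. Each oriented edge $e$ of $T(\Lambda_\Gamma)$ corresponds to an annulus in $\PP^1(K)-\Lambda_\Gamma$ separating the compact opens attached to the two basic sets $A(e)$ and $A(\bar e)$; expanding $f$ in the annular parameter and collecting the relevant residues into a polynomial gives $\Res(n)(\omega)(e)\in P_{n-2}[X]$. That this lands in $\Coc(\Gamma,n)$ is then checked term by term: antisymmetry $c(e)=-c(\bar e)$ comes from reversing the orientation of the annulus; the harmonicity relation $\sum_{e\to v}c(e)=0$ is a consequence of the residue theorem on the rational component of the special fibre indexed by $v$ (the sum of residues of a differential on a $\PP^1$ vanishes, and the higher-order residues yield the vanishing of each coefficient of the polynomial value); and $\Gamma$-equivariance follows from the modular transformation law of $f$ together with the $\Gamma$-actions on edges and on $P_{n-2}[X]$.

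Next I would construct $\mathrm{Poisson}(n)$. Given $c\in\Coc(\Gamma,n)$, Schneider's prescription $\mu_c(U(e))=c(e)$ yields a finitely additive $P_{n-2}[X]$-valued distribution on $\Lambda_\Gamma$, and I integrate a Poisson kernel against it; in weight two this is the familiar
\[ \mathrm{Poisson}(2)(c)(z)=\int_{\Lambda_\Gamma}\frac{d\mu_c(t)}{z-t}, \]
and in general weight one uses its weight-$n$ analogue. The key point, and the step I expect to be the main obstacle, is that $\mu_c$ is only finitely additive, so the integral must be interpreted through the Amice--V\'elu--Vishik integration theory: one has to verify that the kernel, viewed as a function of the boundary variable $t$, is locally analytic of an order compatible with the degree $\le n-2$ of the moments carried by $\mu_c$, so that the pairing converges. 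Granting this, for $z$ in a fixed affinoid of $\PP^1(K)-\Lambda_\Gamma$ the kernel is analytic in $z$ uniformly, so $\mathrm{Poisson}(n)(c)$ is rigid analytic there, and the weight-$n$ automorphy law follows from the transformation of the kernel under the simultaneous M\"obius action of $\gamma\in\Gamma$ on $z$ and $t$ combined with the equivariance of $\mu_c$.

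It remains to see that the two maps are inverse. The composite $\Res(n)\na\mathrm{Poisson}(n)$ is a local computation: taking residues of the Poisson integral along the annulus attached to $e$ reads off exactly $\mu_c(U(e))=c(e)$, so this composite is the identity on $\Coc(\Gamma,n)$. For the other composite it then suffices to note that both maps are injective and that the two spaces have equal finite dimension --- by Riemann--Roch $\dim H^0(X_\Gamma,\Omega_{X_\Gamma}^{\otimes n/2})$ equals $g$ for $n=2$ and $(n-1)(g-1)$ for $n\ge 4$, matching $\dim\Coc(\Gamma,n)$ --- so a one-sided inverse is automatically two-sided. Thus the analytic verification that the Amice--V\'elu--Vishik pairing of the Poisson kernel with $\mu_c$ produces a genuine rigid analytic modular form, and that residues invert it, is the crux; everything else is bookkeeping in the edge--annulus dictionary.
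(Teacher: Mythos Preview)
The paper does not prove this theorem; it is quoted from Teitelbaum \cite{Teitelbaum} (with Schneider \cite{Schneider} for the residue direction), and the paper only records the explicit formulas for the two maps, namely
\[
c_f(e)(X)=\sum_{i=0}^{2n-2}\Res_e(z^i f\,dz)\binom{2n-2}{i}X^i
\quad\text{and}\quad
f(z)=\int_{\Lambda_\Gamma}\frac{1}{z-x}\,d\mu_c(x),
\]
without supplying any argument beyond the citation. Your sketch is therefore not to be compared against a proof in the paper but against Teitelbaum's original argument, and as such it is an accurate outline of that proof: the residue map is built exactly as you describe, the Poisson integral is made sense of via the Amice--V\'elu--Vishik machinery (this is indeed the analytic crux, as you flag), and the two are shown to be mutually inverse. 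One small caveat on your last paragraph: appealing to a dimension count to close the loop presupposes that $\dim_K \Coc(\Gamma,n)$ has been computed independently to equal $(n-1)(g-1)$ (respectively $g$), and this computation is itself not entirely formal---it is a group cohomology calculation for the free group $\Gamma$ acting on $P_{n-2}[X]$, carried out in \cite{Teitelbaum}. If you do not want to rely on that, Teitelbaum also verifies $\mathrm{Poisson}(n)\circ\Res(n)=\mathrm{id}$ directly; either route is fine.
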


The maps are described explicitly as follows: in (\cite{Schneider}, p.\ 221, \cite{Teitelbaum} p.\ 397) Schneider and Teitelbaum define:
\[\Sp_{2n}(\Gamma)\to \Coc(\Gamma,2n) \colon f \mapsto c_f \]
with \[c_f(e)(X)=\sum_{i=0}^{2n-2}\Res_e(z^if dz)\binom{2n-2}{i} X^i,\]
where the \emph{residue} $\Res_e(\omega)$ is the usual rigid analytic residue of a differential form along the annulus defined by the edge $e$. 
The inverse of this mapping is the Poisson integral of Teitelbaum in (\cite{Teitelbaum}, part 2):
\[f(z)=\int_{\Lambda_\Gamma}\frac{1}{(z-x)}d\mu_c(x).\]

We finish this section with the proof of the first part of our theorem.

\begin{proof}[Proof of Theorem \ref{mainthm}, part \textup{(i)}]
Let $\Gamma_1$ and $\Gamma_2$ be Schottky groups of the same genus. Let $\alpha:\Gamma_1\isomto\Gamma_2$ be a group isomorphism. By Corollary \ref{phiexists} there is an $\alpha$-equivariant boundary morphism
\[\f:\Lambda_{\Gamma_1}\to\Lambda_{\Gamma_2}.\]
By Theorem \ref{treerigidity2}, $\f$ is absolutely continuous with respect to the respective Patterson--Sullivan measures if and only if the special fibers, that is the quotient graphs $\Gamma_1\backslash T(\Lambda_{\Gamma_1})$ and $\Gamma_2\backslash T(\Lambda_{\Gamma_2})$, are isomorphic. Moreover, there is then an isometry $F:T(\Lambda_{\Gamma_1})\to T(\Lambda_{\Gamma_2})$. This isometry induces a $K$-linear isomorphism by pullback via $F^{-1}$: 
\[F_*: \Coc(\Gamma_1, 2)\to\Coc(\Gamma_2, 2), c\mapsto c\circ F^{-1}.\]
\end{proof}

\begin{remark}
It is \emph{not} true that pullback by $F^{-1}$ induces an isomorphism of higher weight harmonic cocycles for $\Gamma_1$ and $\Gamma_2$ --- the problem is in the equivariance in general weight $\ell$, which means invariance in weight $2$. As we will see in the next section, this is exactly the key to formulating a measure-theoretic criterion for isomorphism. 
\end{remark}

\section{Rigidity for Mumford curves} 
An important consequence of the theorem of Teitelbaum is that we can define a \emph{product} of harmonic cocycles, simply by postulating the commutativity of the diagram  (\ref{dia}) from the introduction. This multiplication can be used to translate multiplicative properties of modular forms / differential forms into those of harmonic measures. 

The multiplicative properties of differential forms and how they relate to the isomorphism type of the curve are the subject of classical theorems of Noether, Babbage \cite{Babbage}, Enriques-Chisini and Petri \cite{Petri} (see \cite{Dodane} for a detailed historical overview). We refer to Saint-Donat \cite{SaintDonat} or \cite{ACGH} III.3 for a modern proof over arbitrary algebraically closed ground fields (which is the setting that we will need). 

Suppose now that $X=X_\Gamma$ is not hyperelliptic. Then $\Omega_{X}$ is normally generated, by a theorem of Noether, so the natural multiplication map 
$$ \mu_n \colon \mathrm{Sym}^n H^0(X,\Omega_X) \rightarrow H^0(X,\Omega_X^{\otimes n}) \colon f_1 \otimes \dots \otimes f_k \mapsto f_1 \cdot \dots \cdot f_k $$
is surjective for all $n \geq 1$;  $\Omega_X$ is base point free and we have the canonical embedding $$\varphi_X \colon X \rightarrow \PP|\Omega_X|=\PP^{g-1}.$$ Now $\ker(\mu_n)$ (for varying $n$) generate the ideal $I_{\Omega_X}$ that defines a canonical embedding of $X$, and the theorem of Babbage--Chisini--Enriques--Petri and Saint-Donat says the following: 

\begin{theorem}\label{bceps}For non-hyperelliptic $X$, the ideal  $I_{\Omega_X}$ is spanned by $\ker(\mu_n)$ for $n=2$, unless one of the following cases occurs:  \begin{enumerate} \item[\textup{(a)}] if $g=3$, the ideal is generated by the quartic that generates $\ker(\mu_4)$; \item[\textup{(b)}] if $g=6$ and the curve is isomorphic to a plane quintic, the ideal is generated by $\ker(\mu_n)$ for $n \leq 3$, but is not generated by $\ker(\mu_2)$ only; \item[\textup{(c)}] if $X$ is trigonal (i.e., admits a degree 3 morphism to $\PP^1$), then the ideal is generated by $\ker(\mu_n)$ for $n \leq 3$, but is not generated by $\ker(\mu_2)$ only. \qed \end{enumerate}
\end{theorem}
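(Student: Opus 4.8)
Since $\Omega_X$ is normally generated, the maps $\mu_n$ are surjective and the $\ker(\mu_n)$ generate the homogeneous ideal $I_{\Omega_X}$ of the canonical image, so the statement is purely about the degrees in which $I_{\Omega_X}$ is generated; this is the content of the Enriques--Petri analysis. A Riemann--Roch count gives $\dim\ker(\mu_2)=\binom{g+1}{2}-(3g-3)=\binom{g-2}{2}$. For $g=3$ there are therefore no quadrics at all, $\varphi_X$ realizes $X$ as a smooth plane quartic, and $\ker(\mu_4)$ contributes the single quartic that cuts it out; this settles case (a) directly.

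For $g\geq 4$ the plan is to follow Petri. Fixing a general point $P\in X$, one chooses a basis of $H^0(X,\Omega_X)$ adapted to the vanishing sequence at $P$, obtaining normalized degree-one generators $x_1,\dots,x_g$ of the canonical ring together with an explicit family of quadratic relations $Q_{ij}$ that span $\ker(\mu_2)$ and a complementary family of cubic relations $R_i$ coming from $\ker(\mu_3)$. The heart of the argument is a direct computation showing that, modulo the ideal generated by the $Q_{ij}$, each $R_i$ reduces to an expression controlled by a single auxiliary quadratic differential $f_i\in H^0(X,\Omega_X^{\otimes 2})$. If these auxiliary forms are suitably degenerate in Petri's sense, the cubics are already consequences of the quadrics and $I_{\Omega_X}$ is generated in degree two; otherwise the surviving cubic relations force the common zero locus of $\ker(\mu_2)$ to be a surface strictly containing the canonical curve $X$.

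Thus the ideal fails to be generated by quadrics exactly when the intersection of all quadrics through the canonical curve strictly contains $X$, and one identifies this locus geometrically: it is a two-dimensional rational normal scroll precisely when $X$ is trigonal (case (c)), and the Veronese surface $v_2(\PP^2)\subset\PP^5$ precisely when $g=6$ and $X$ is a smooth plane quintic (the one non-trigonal exception, case (b)), the numerical cutoff being forced by a plane quintic having genus $\binom{5-1}{2}=6$ and canonical sheaf $\mathcal{O}_X(2)$. In both cases the scroll, respectively the Veronese surface, is itself cut out by quadrics, but $X$ sits inside it as the zero locus of cubics, so $I_{\Omega_X}$ is generated by $\ker(\mu_2)$ together with $\ker(\mu_3)$ and by nothing in degree two alone.

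The main obstacle is the index-heavy syzygy bookkeeping of the second paragraph: one must verify that the $Q_{ij}$ exhaust $\ker(\mu_2)$ and that the cubic relations collapse cleanly onto the auxiliary forms $f_i$, and then match the nonvanishing of these forms with the determinantal (scrollar) structure of the quadric hull and with trigonality. I would isolate this as a separate lemma on the scroll, respectively Veronese, geometry of the base locus of $\ker(\mu_2)$, and then read off cases (b) and (c) from the two possible surfaces that can contain a canonical curve on which the quadrics fail to suffice.
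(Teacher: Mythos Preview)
The paper does not prove this theorem at all: the statement is followed immediately by \qedsymbol, and the preceding paragraph explicitly refers to Saint-Donat \cite{SaintDonat} and \cite{ACGH}~III.3 for a proof over an arbitrary algebraically closed ground field. So there is nothing to compare against in the paper itself.

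Your sketch is, in outline, the classical Petri--Saint-Donat argument that those references contain: the Riemann--Roch count of quadrics, the choice of a basis of $H^0(X,\Omega_X)$ adapted to a general point, the explicit quadratic relations $Q_{ij}$ and the reduction of the cubic relations modulo them, and finally the identification of the intersection of the quadrics with either a rational normal scroll (trigonal case) or the Veronese surface in $\PP^5$ (plane quintic). That is exactly the route taken in \cite{SaintDonat} and summarized in \cite{ACGH}~III.3, so in that sense your proposal matches what the paper is invoking. One small caveat: the delicate part you flag --- showing that Petri's auxiliary quadratic differentials $f_i$ vanish precisely when the quadric hull is the canonical curve itself, and otherwise identifying the resulting surface --- is genuinely the substance of the proof and cannot be left as ``a separate lemma'' without essentially reproducing Saint-Donat's argument. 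If you intend this as a sketch with a pointer to the literature, it is fine; if you intend it as a self-contained proof, that lemma is the whole theorem.
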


\begin{proof}[Proof of Theorem \ref{mainthm}, part \textup{(ii)}]
Suppose that we have two Schottky groups $\Gamma_1$ and $\Gamma_2$, corresponding to non-hyperelliptic Mumford curves $X_1$ and $X_2$. From part (i) of the theorem, we have a $K$-linear isomorphism of vector spaces $$F_* \colon \Coc(\Gamma_1,2) \rightarrow \Coc(\Gamma_2,2).$$  
 Recalling the isomorphisms of $K$-vector spaces $$H^0(X_1,\Omega_{X_1}) \cong \Sp_2(\Gamma) \cong \Coc(\Gamma,2)$$ for $\Gamma=\Gamma_i$, the map $F_*$ can be considered as a linear isomorphism 
 $$H^0(X_1,\Omega_{X_1}) \isomto H^0(X_2,\Omega_{X_2}),$$ which extends to a map of the symmetric algebra 
 $$ F_* \colon \bigoplus_{n \geq 0} \mathrm{Sym}^n H^0(X_1,\Omega_{X_1}) \isomto \bigoplus_{n \geq 0} \mathrm{Sym}^n  H^0(X_2,\Omega_{X_2}), $$
and hence to an isomorphism $$\PP|\Omega_{X_1}| \rightarrow \PP|\Omega_{X_2}|.$$ The map induces an isomorphism between $X_1$ and $X_2$ precisely if it respects the ideal of relations in the canonical embeddings, i.e., if $F_*(\ker(\mu^1_n)) \subseteq \ker(\mu^2_n)$ for all weights $n$. 
 
 Using the Poisson kernel Theorem \ref{poissonkernel} of Teitelbaum, we can translate this condition to a condition on harmonic measures, as follows: 
for any $n$ and $\Gamma=\Gamma_i$, consider the map 
 $$ \mu_n'(\Gamma) \colon \mathrm{Sym}^n \Coc(\Gamma,2) \overset{\mathrm{Poisson}(2)^{\otimes n}}{\longrightarrow} \mathrm{Sym}^n H^0(X,\Omega_X) \overset{\mu_n}{\rightarrow} H^0(X,\Omega_X^{\otimes n}) \overset{\mathrm{Res(n)}}{\longrightarrow} \Coc(\Gamma,2n).  $$
 Then the isomorphism of $X_1$ and $X_2$ is equivalent to 
 $$ F_*(\ker(\mu_n'(\Gamma_1))) = \ker(\mu_n'(\Gamma_2))\mbox{ for all } n \geq 0. $$
 Now Theorem \ref{bceps} implies that we only need to require this for the given $n$ in the given cases. Since $X_1$ an $X_2$ are isomorphic over $K$ and defined by Schottky groups $\Gamma_1,\Gamma_2$ in $\PGL(2,k)$, the groups $\Gamma_1$ and $\Gamma_2$ are conjugate in $\PGL(2,k)$ (Mumford, \cite{Mumford} Corollary 4.11).
 \end{proof}
 
 \begin{remark}
 We do not know how to deal with the case where the curves are hyperelliptic. The reason for this is that the measure-theoretic property from tree-rigidity only gives a natural isomorphism on the space of harmonic cocycles of weight two, but for a hyperelliptic curve, the image of the canonical map is $\PP^1$. Trying to use a pluricanonical embedding does not help, since we cannot conclude from tree-rigidity that the tree isometry induces an isomorphism of the corresponding spaces of higher weight harmonic measures. 
 \end{remark}
 
 \begin{remark}
 One may try to translate the property of being trigonal into a property of the reduction graph or the Schottky group. 
 Matthew Baker \cite{Baker} has introduced and studied the concept of gonality of a graph, and his results imply in our case that the gonality of the reduction graph of $X$ (for a semistable model in which all  components are smooth) bounds above the $K$-gonality of $X$.
 \end{remark}
 
 \begin{remark}
 One may wonder how constructive this theorem can be made, in the following sense. Suppose given two Schottky groups of the same genus, given by finite sets of explicit generating matrices in $\PGL(2,k)$. Are the corresponding curves isomorphic? From the point of view of traditional rigidity, the question is equivalent to the Schottky group being conjugate in $\PGL(2,k)$; but we do not know an efficient algorithm for $k$ an infinite (recursive) field, and the general subgroup conjugacy problem is believed to be computationally hard (compare e.g., \cite{RoneyDougal}). From our point of view of measure theoretic rigidity, one would have to construct a basis for the spaces of harmonic measures, find the matrix that represents $F$ on them, and then compute kernels of multiplication maps. The computability of the latter depends on the computability of the maps $(\mathrm{Poisson},\mathrm{Res})$. 
 \end{remark}

\begin{remark} \label{ak}
It is well-known that two sets of four distinct points in $\PP^1(k)$ can be mapped to each other by a fractional linear transformation if and only if they have the same cross ratio.  Now given two sets $S$, $Q$ of $2g$ ($g \geq 2$) distinct points in $\PP^1(k)$, is it possible to find an element $g\in\PGL(2,k)$ such that $g(S)=Q$? Our theorem relates to this problem in the following way. Make a partition $\mathrm{P}_S$ of the set $S$ into pairs $(s_i^+,s_i^-)$ for $1\leq i\leq g $. For every pair, choose a hyperbolic element $\gamma_i\in\PGL(2,k)$ of amplitude $1$ and for which $s_i^+$ (respectively $s_i^-$) is the attracting (respectively the repelling) fixed point of $\gamma_i$. Choose minimal exponents $(n_1,...,n_g)$ such that 
\[\Gamma(\mathrm {P}_S)=\langle\gamma_1^{n_1},...,\gamma_g^{n_g}\rangle,\]
is a Schottky group (to find these minimal exponents, one may for example use Remark \ref{funddomain}). Since all data needed is  of purely metric nature on the tree, these minimal exponents are invariant under conjugation. Now the following holds: for $S$ and $Q$, there is an element $g\in\PGL(2,k)$ such that $g(S)=Q$ if and only if there is a partition $\mathrm{P}_S$ and a partition $\mathrm{P}_Q$ such that  
$g\Gamma(\mathrm{P}_S)g^{-1}=\Gamma(\mathrm{P}_Q),$ which means
$$ X_{\Gamma(\mathrm{P}_S)} \cong X_{\Gamma(\mathrm{P}_Q)}. $$
For this, our main theorem gives a purely measure theoretic reformulation (to be checked on the finitely many choices of partitions). 
\end{remark}

\bibliographystyle{amsplain}

\end{document}